\documentclass[11pt]{amsart}
\usepackage{amsmath,amsfonts,graphicx}
\usepackage{amssymb}
\usepackage{comment}
\usepackage{color}
\usepackage{tikz}
\usepackage{float}
\usepackage[caption = false]{subfig}
\newtheorem{thm}{Theorem}[section]
\newtheorem{prp}[thm]{Proposition}
\newtheorem{lmm}[thm]{Lemma}
\newtheorem{cor}[thm]{Corollary}
\newtheorem{dfn}[thm]{Definition}
\newtheorem{rmk}[thm]{Remark}
\newtheorem{cjt}[thm]{Conjecture}

\newcommand{\thomas}[1]{{\color{red}*}\marginpar{\tiny  \color{red} TR: #1}}

\setlength{\textwidth}{16cm}
\setlength{\textheight}{21.7cm}
\setlength{\oddsidemargin}{0.0cm}
\setlength{\evensidemargin}{0.0cm}
\setlength{\headsep}{0.5cm} 
\setlength{\topmargin}{-0.3cm}

\renewcommand{\theenumi}{\arabic{enumi}}

\title{Pseudo-monodromy and the Mandelbrot set}
\author{Yutaka Ishii and Thomas Richards}
\address{Y.I. \& T.R., Department of Mathematics, Kyushu University, Motooka, Fukuoka 819-0395, Japan. Emails: \texttt{yutaka@math.kyushu-u.ac.jp}, \texttt{richards.jordan.thomas.745@m.kyushu-u.ac.jp}}

\begin{document}

\begin{abstract}
We investigate the discontinuity of codings for the Julia set of a quadratic map. To each parameter ray,  we associate a natural coding for Julia sets on the ray. Given a hyperbolic component $H$ of the Mandelbrot set, we consider the codings along the two parameter rays landing on the root point of $H$. Our main result describes the discontinuity of these two codings in terms of the kneading sequences of the hyperbolic components which are conspicuous to $H$. This result can be interpreted as a solution to the degenerate case of the monodromy action conjecture in the horseshoe locus for the complex H\'enon family.

\bigskip

\end{abstract} 

\maketitle


\section{Introduction}

For polynomial maps of degree $d\geq 2$ in one complex variable, the monodromy problem aims to understand how Julia sets vary as one traverses loops in the degree $d$ shift locus. Since the work of Blanchard, Devaney and Keen~\cite{BDK}, this problem is well-understood; any loop in the shift locus induces an automorphism of the one-sided $d$-shift, and conversely any automorphism of the one-sided $d$-shift is realised by a loop in the shift locus. This gives insight into the topological structure of the degree $d$ shift locus.

In the 1990's, Hubbard conjectured an analogous correspondence for the complex H\'enon family, between the monodromy actions along loops in the hyperbolic horseshoe locus and  automorphisms of the bi-infinite shift space on two symbols. Based on some numerical experiments, Lipa~\cite{Li} formulated a more detailed conjecture, called the \emph{monodromy action conjecture}, which relates the monodromy action along an individual loop in the hyperbolic horseshoe locus to certain bifurcation structures in the parameter space of the complex H\'enon family (see Appendix \ref{sec:Henon} for more details). This conjecture predicts the existence of a surprising bridge between the dynamical space and the parameter space for the complex H\'enon dynamics (compare the so-called ``magic formula'' of Douady and Hubbard for the quadratic family~\cite{DH1, DH2}). 

However, a proof of the monodromy action conjecture remains beyond our reach. This is partly because, unlike the one-dimensional case, no generating set is known for the bi-infinite $2$-shift, and the technique of surgery used in~\cite{BDK} is not applicable to complex two-dimensional dynamics. Moreover, some of the definitions employed by Lipa to formulate the conjecture, such as the notion of \emph{herds} introduced by Koch  (see~\cite{Li}), 
lack necessary mathematical rigour.

The purpose of the present paper is to provide a solution to the complex one-dimensional analogue of the monodromy action conjecture in an appropriate framework. Recall that the  Mandelbrot set $\mathfrak{M}$ is known to be connected~\cite{DH1, DH2}, therefore the only non-trivial loop in the shift locus $\mathbb{C}\setminus\mathfrak{M}$ is the one surrounding the Mandelbrot set $\mathfrak{M}$.
To formulate the monodromy action problem \textit{\`a la} Lipa under such circumstances, we consider two parameter rays in $\mathbb{C}\setminus\mathfrak{M}$ landing on the same root point $r_H$ of a hyperbolic component $H$ of $\mathfrak{M}$. Then, the discontinuity between the codings for Julia sets as one approaches $r_H$ along the two parameter rays can be interpreted as the ``pseudo-monodromy action'' along the ``pseudo-loop'' passing through the root point $r_H$ (see Section \ref{sec:monodromy} for more details). 

One of our main results (Corollary \ref{cor:main}) states that this discontinuity can be completely characterised in terms of the kneading sequences of the hyperbolic components \emph{conspicuous} to $H$ (see Section \ref{sec:kneading} for definition). Moreover, in Appendix \ref{sec:Henon} we explain how this result can be regarded as a solution to the degenerate case of the monodromy action conjecture, and the relevance to the full monodromy action conjecture. 
We remark that notions such as herds, which were not established rigorously in the formulation of the monodromy action conjecture do not appear in the degenerate case and as such our problem is well-posed.
The degenerate case was also considered by Lipa (see Section 5.6 in~\cite{Li}), but only for points with itineraries not containing $\star$, simplifying the analysis considerably.
We also note that Corollary \ref{cor:main} generalises the previous work by Atela~\cite{A1, A2} on the bifurcation of parameter rays. 

The organisation of this paper is as follows. In the next section, the monodromy problem for the quadratic family is formulated in terms of pseudo-loops in $\mathbb{C}\setminus\mathfrak{M}$. In Section \ref{sec:kneading} we recall some combinatorial aspects of the Mandelbrot set $\mathfrak{M}$ and introduce the notion of conspicuousness. Section \ref{sec:main} is dedicated to stating the main results, and their proofs are given in Section \ref{sec:proof}. In Appendix \ref{sec:examples} we present several examples. Appendix \ref{sec:Henon} is dedicated to introducing the monodromy action conjecture for the complex H\'enon family and explaining how Corollary \ref{cor:main} gives rise to a solution to the degenerate case of the conjecture. 

\medskip

\noindent
\textit{Note.}
During the preparation of this manuscript we were made aware of the preprint~\cite{BB} which addresses a similar problem under the assumption of \emph{narrowness} of a hyperbolic component. In this respect, their main result (Theorem 1.3 in~\cite{BB}) seems to be a special case of Corollary \ref{cor:main} in this manuscript.

\section{Pseudo-monodromy}\label{sec:monodromy}

In this section we first recall some basic facts on the Mandelbrot set $\mathfrak{M}$ (see~\cite{D, DH2} for details) and then formulate the notion of pseudo-monodromy for $\mathfrak{M}$. 

Write $\mathbb{T}\equiv\mathbb{R}/\mathbb{Z}$. Let $H$ be a hyperbolic component of $\mathfrak{M}$, and let $r_H$ be its root point. Let $R_{\mathfrak{M}}(\theta)$ be the external ray of angle $\theta$ for $\mathfrak{M}$ and set
\[\Theta_{\mathfrak{M}}(H)\equiv \big\{\theta\in\mathbb{T} : R_{\mathfrak{M}}(\theta) \mbox{ lands on }r_H\big\}.\]
Denote by $H_{\heartsuit}$ the \emph{Main Cardioid} of $\mathfrak{M}$. A celebrated theorem of Douady and Hubbard says 

\begin{thm}[Douady--Hubbard~\cite{DH2}]\label{thm:DH}
For any hyperbolic component $H$ different from $H_{\heartsuit}$, there are exactly two angles $0<\theta^-_{H}<\theta^+_{H}<1$ so that $\Theta_{\mathfrak{M}}(H)=\{\theta^-_{H}, \theta^+_{H}\}$.
\end{thm}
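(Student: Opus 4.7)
The plan is to follow the classical strategy of Douady and Hubbard, proceeding via the dynamical plane of the root parameter and the parameter-to-dynamical-ray correspondence. I will structure the proof in three steps.

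\smallskip

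First, I would characterize $r_H$ dynamically. Since $H\neq H_{\heartsuit}$, the root $r_H$ is a parabolic parameter for which $f_c(z)=z^2+c$ (with $c=r_H$) possesses a parabolic cycle whose immediate basin contains the critical point $0$. This parabolic datum is nontrivial: either the period of the cycle is at least two (the \emph{primitive} case), or the multiplier of the cycle is a nontrivial root of unity (the \emph{satellite} case). The main cardioid $H_{\heartsuit}$ is excluded precisely because its root $c=1/4$ is the only hyperbolic root at which the parabolic cycle is a fixed point with multiplier $+1$.

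\smallskip

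Second, I would analyze which dynamical rays for $f_{r_H}$ land on the \emph{characteristic} periodic point $\alpha_0$ of the parabolic cycle, that is, the periodic point on the boundary of the Fatou component containing the critical value $c$. By the theory of orbit portraits (developed by Milnor after Douady--Hubbard), the set of rational external angles whose dynamical rays land on a given parabolic periodic point is finite, nonempty, and invariant under angle doubling; moreover it forms a union of cycles of equal length and the rays landing at $\alpha_0$ bound a sector containing $c$ in the Fatou set. The key combinatorial input is that \emph{exactly} two rays land at $\alpha_0$: at least two, because the nontriviality established in the first step prevents the portrait from being the degenerate fixed portrait of the main cardioid, and at most two, because the immediate basin at $\alpha_0$ contains the critical value, blocking any further ray from accessing $\alpha_0$ between the two existing ones.

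\smallskip

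Third, I would transport this dynamical statement into the parameter plane via the Douady--Hubbard correspondence between parameter rays and dynamical rays of the critical value. The assertion is that $\theta\in\Theta_{\mathfrak{M}}(H)$ if and only if the dynamical ray $R_{f_{r_H}}(\theta)$ lands at $\alpha_0$. One direction comes from a perturbation/holomorphic motion argument: as $c\to r_H$ along $R_{\mathfrak{M}}(\theta)$ from outside $\mathfrak{M}$, the critical value $c$ escapes along the dynamical ray of angle $\theta$, and a limiting argument shows this ray must land at $\alpha_0$. The converse uses that the two angles selected in step two have rational, preperiodic doubling orbits and that the corresponding parameter rays land at a parabolic parameter with the prescribed ray portrait, which by combinatorial rigidity must be $r_H$. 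Combining steps two and three yields $|\Theta_{\mathfrak{M}}(H)|=2$, and one sets $\theta_H^{-}<\theta_H^{+}$ to be the two angles in $(0,1)$.

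\smallskip

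The principal obstacle is the second step: proving that the characteristic point of the parabolic orbit carries exactly two landing rays requires the full combinatorial machinery of orbit portraits together with the geometric constraint imposed by the critical point lying in the immediate basin. The first and third steps are essentially structural once this combinatorial fact is in hand.
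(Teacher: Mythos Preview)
The paper does not prove this theorem; it is quoted as a result of Douady and Hubbard~\cite{DH2} and used as background, so there is no proof in the paper to compare against.

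Your outline follows the classical route and is broadly sound, but Step~2 contains a genuine gap in the satellite case. When $H$ is a $p/q$-satellite (e.g.\ the Rabbit component, $q=3$), the characteristic parabolic point $\alpha_0$ is a fixed point of multiplier $e^{2\pi i p/q}$, and \emph{exactly $q$} dynamical rays land there (for the Rabbit: angles $1/7$, $2/7$, $4/7$), not two. Your ``at most two'' argument---that the immediate basin containing the critical value blocks further access---only rules out rays landing in the sector at $\alpha_0$ that contains $c$; it says nothing about the remaining $q-1$ sectors. Consequently the biconditional in Step~3 also fails as stated: the parameter ray of angle $4/7$ lands at the Airplane root, not the Rabbit root, even though the dynamical ray of angle $4/7$ for $c=r_{\mathrm{Rabbit}}$ does land at $\alpha_0$.

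The repair is standard: replace ``rays landing at $\alpha_0$'' everywhere by the pair of \emph{characteristic angles}, i.e.\ the two angles whose dynamical rays bound the sector at $\alpha_0$ containing the critical value. Milnor's orbit-portrait theory shows this characteristic arc is well defined and consists of exactly two distinct angles whenever $H\neq H_{\heartsuit}$, and the Douady--Hubbard parameter/dynamics correspondence matches $\Theta_{\mathfrak{M}}(H)$ with this characteristic pair, not with the full set of rays landing at $\alpha_0$. With that correction your three-step plan goes through.
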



The above theorem in particular yields that the union $R_{\mathfrak{M}}(\theta_{H}^-)\cup\{r_H\}\cup R_{\mathfrak{M}}(\theta_{H}^+)$ divides the complex plane  $\mathbb{C}$ into two parts when $H$ is different from $H_{\heartsuit}$. 
The connected component of $\mathbb{C}\setminus (R_{\mathfrak{M}}(\theta_{H}^-)\cup\{r_H\}\cup R_{\mathfrak{M}}(\theta_{H}^+))$ containing $H$ is called the \emph{wake} associated with $H$ and denoted by $\mathcal{W}_{H}$. 

Given a parameter $c\in \mathbb{C}\setminus \mathfrak{M}$, there exists a unique $\theta_c\in \mathbb{T}$ so that $c\in R_{\mathfrak{M}}(\theta_c)$. Denote by $R_c(\theta)$ the dynamical ray of angle $\theta$ for the quadratic map $p_c$. Then, the union $R_c(\frac{\theta_c}{2})\cup \{0\}\cup R_c(\frac{\theta_c}{2}+\frac{1}{2})$ divides $\mathbb{C}$ into two pieces, defining a partition of $J_c$. We label the piece containing $R_c(\frac{\theta_c}{2}+\frac{3}{4})$ by $A$ and the piece containing $R_c(\frac{\theta_c}{2}+\frac{1}{4})$ by $B$. This gives a coding map: 
\[h_c : J_c \longrightarrow \big\{A, B\big\}^{\mathbb{N}}\]
which conjugates $p_c$ on $J_c$ and the shift map on $\{A, B\}^{\mathbb{N}}$. Moreover, since the curve depends continuously on $c\in \mathbb{C}\setminus \mathfrak{M}$, we can choose the coding map so that $\mathbb{C}\setminus \mathfrak{M}\ni c \mapsto h_c^{-1}(\underline{i})\in \mathbb{C}$ is continuous for every fixed $\underline{i}\in\{A, B\}^{\mathbb{N}}$.

Let $H$ be a hyperbolic component of $\mathfrak{M}$ different from the Main Cardioid and let $\theta_H^-<\theta_H^+$ be the two external angles as in Theorem \ref{thm:DH}. Consider the equipotential curve $E$ for $\mathfrak{M}$ containing the base-point $c_{\ast}\approx -2.3$. We concatenate a part of $E$ from $c_{\ast}$ to $E\cap R_{\mathfrak{M}}(\theta_H^+)$ (resp. $E\cap R_{\mathfrak{M}}(\theta_H^-)$) and the bounded connected component of $R_{\mathfrak{M}}(\theta_H^+)\setminus E$ (resp. $R_{\mathfrak{M}}(\theta_H^-)\setminus E$) to define a path $\gamma_H^+$ (resp. $\gamma_H^-$) from $c_{\ast}$ to $r_H$ in $\mathbb{C}\setminus \mathfrak{M}$ so that the concatenation: 
\[\gamma_H^+\cdot\{r_H\}\cdot (\gamma_H^-)^{-1}\] 
does not surround $H_{\heartsuit}$. An example of a pseudo-loop $\gamma$ is given in Figure \ref{fig:loop}.

\begin{figure}
\centering

\begin{tikzpicture}
\node(ex31) at (0,0){\includegraphics[width=3in]{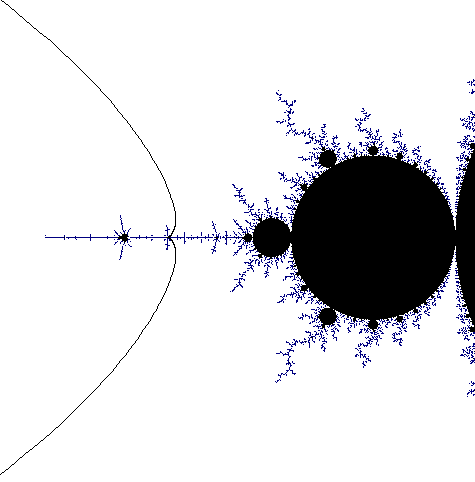}};
\draw[->,black,thin] (-3.79,3.85) .. controls (-5,4) and (-5,-4) .. (-3.78,-3.76);
\node(1) at (-5,0) {$\gamma$};
\filldraw [black] (-4.7,0) circle (1pt);
\end{tikzpicture}
  
\caption{A pseudo-loop $\gamma$ associated to the Lobster hyperbolic component.}
\label{fig:loop}
\end{figure}

Below we interpret this closed loop as if it were completely contained in the horseshoe locus for the quadratic family and consider its monodromy action. However, when $c$ varies along $\gamma_H^{\pm}$ to $r_H$, the Julia set $J_c$ implodes at  $c=r_H$ (see~\cite{D}). Hence it is not clear how the Julia set along $\gamma_H^+$ is continued to the Julia set along $\gamma_H^-$ through $c=r_H$. Our strategy is to, instead of looking at the Julia set, focus on the bifurcation of external rays as in~\cite{A1, A2}. 

To achieve this, we introduce the following symbolic coding. Let $\sigma : \mathbb{T}\to\mathbb{T}$ be the angle-doubling map. Given an angle $\alpha\in\mathbb{T}$, define a coding map
\[I^{\circ}_{\alpha} \, : \, \mathbb{T}\ni\theta\longmapsto i_0i_1\cdots\in\big\{A, B, \circ \big\}^{\mathbb{N}}\]
by 
\[i_n\equiv
\begin{cases}
	A & \mbox{ if } \sigma^n(\theta)\in \left(\frac{\alpha+1}{2}, \frac{\alpha}{2}\right) \\
	B & \mbox{ if } \sigma^n(\theta)\in \left(\frac{\alpha}{2}, \frac{\alpha+1}{2}\right) \\
	\, \circ & \mbox{ if } \sigma^n(\theta)=\frac{\alpha}{2} \mbox{ or } \frac{\alpha+1}{2}
\end{cases}
\]
for $n\geq 0$, where $\mathbb{N}$ denotes the set of natural numbers including $0$. We call $I^{\circ}_{\alpha}(\alpha)$ the \emph{kneading sequence} of the angle $\alpha$.

We say that an external ray $R_c(\theta)$ is \emph{unbranched} if it does not hit $\bigcup_{n\geq 0}p_c^{-n}(0)$. In this case, the external ray $R_c(\theta)$ lands on a point in $J_c$. One can see that an external ray $R_c(\theta)$ is unbranched if and only if $I^{\circ}_{\theta_c}(\theta)$ does not contain $\circ$. The next result indicates that the coding $h_c$ of the Cantor Julia set is closely related to the coding $I^{\circ}_{\theta_c}$ of the external angles.

\begin{thm}[Atela~\cite{A2}]
Let $R_c(\theta)$ be an unbranched external ray and let $z\in J_c$ be its landing point. Then, we have $I^{\circ}_{\theta_c}(\theta)=h_c(z)$.
\end{thm}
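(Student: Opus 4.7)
The plan is to match the two codings partition-by-partition and then propagate the match through the dynamics by induction. The key observation is that the dividing curve $R_c(\theta_c/2) \cup \{0\} \cup R_c(\theta_c/2 + 1/2)$ of the $J_c$ partition is the full preimage of $R_c(\theta_c)$ under $p_c$, with the two half-rays meeting at the critical point $0$; correspondingly, the dividing angles $\theta_c/2$ and $\theta_c/2 + 1/2$ are the two preimages of $\theta_c$ under $\sigma$. Since $c \notin \mathfrak{M}$ the critical point $0$ escapes, so these dividing rays do not meet $J_c$, and each point of $J_c$ lies in exactly one of the two open partition pieces.

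First I would establish the base case: for an unbranched ray $R_c(\theta)$ landing at $z \in J_c$, the equality $(h_c(z))_0 = (I_{\theta_c}^\circ(\theta))_0$ holds. By definition, $(I_{\theta_c}^\circ(\theta))_0 = A$ precisely when $\theta$ lies in the open arc $(\theta_c/2 + 1/2,\, \theta_c/2) \subset \mathbb{T}$ containing $\theta_c/2 + 3/4$, while $(h_c(z))_0 = A$ precisely when $z$ lies in the open region containing $R_c(\theta_c/2 + 3/4)$. Because external rays in the dynamical plane do not cross, as $\theta'$ varies over an open arc of $\mathbb{T}$ bounded by $\theta_c/2$ and $\theta_c/2 + 1/2$, the rays $R_c(\theta')$ sweep out a single open piece of the partition. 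Hence $R_c(\theta)$ lies in the same piece as $R_c(\theta_c/2 + 3/4)$ if and only if $\theta$ lies in the corresponding arc; and since $z$ lies in the closure of that piece but not on the boundary (the dividing rays miss $J_c$), $z$ itself lies in the piece, giving the base case.

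For the inductive step I would exploit the semiconjugacy $p_c(R_c(\theta)) = R_c(\sigma(\theta))$, which also sends the landing point of $R_c(\theta)$ to that of $R_c(\sigma(\theta))$. The unbranchedness of $R_c(\theta)$ is equivalent to $\sigma^n(\theta) \neq \theta_c/2,\, \theta_c/2 + 1/2$ for all $n \geq 0$, so every iterate $R_c(\sigma^n(\theta))$ is again unbranched with landing point $p_c^n(z)$, and in particular no $\circ$ appears in $I_{\theta_c}^\circ(\theta)$. Applying the base case to the pair $(p_c^n(z),\, \sigma^n(\theta))$ and combining with the shift identities $(h_c(z))_n = (h_c(p_c^n(z)))_0$ and $(I_{\theta_c}^\circ(\theta))_n = (I_{\theta_c}^\circ(\sigma^n(\theta)))_0$ gives the equality of the $n$-th symbols for every $n$.

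The main obstacle is the topological step in the base case: justifying rigorously that the continuous family of rays $\{R_c(\theta')\}$, as $\theta'$ ranges over an open arc of $\mathbb{T}$ avoiding $\theta_c/2$ and $\theta_c/2 + 1/2$, remains inside one open piece of $\mathbb{C} \setminus (R_c(\theta_c/2) \cup \{0\} \cup R_c(\theta_c/2 + 1/2))$, together with the fact that an unbranched ray lands in that same piece. Once this non-crossing property and the landing behaviour are in place, what remains is bookkeeping, the labelings of $A$ and $B$ on the two sides having been arranged precisely so that corresponding pieces share their labels.
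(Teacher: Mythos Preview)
The paper does not supply its own proof of this statement; it is quoted as a result of Atela and used only to motivate the definition of the discontinuity locus $\mathrm{Disc}(H)$. Your argument is the standard direct one and is correct: the labels on the two arcs of $\mathbb{T}\setminus\{\theta_c/2,\theta_c/2+1/2\}$ and on the two pieces of $\mathbb{C}\setminus(R_c(\theta_c/2)\cup\{0\}\cup R_c(\theta_c/2+1/2))$ were chosen precisely so that the ray map $\theta\mapsto R_c(\theta)$ respects them, and the semiconjugacy $p_c(R_c(\theta))=R_c(\sigma(\theta))$ then carries the zeroth-symbol match to all symbols. The topological point you flag is exactly the crux, and it is handled by the facts you already name: distinct external rays do not cross in the basin of infinity, and the dividing curve lies entirely in that basin (since $0$ escapes) and hence avoids $J_c$, so the landing point of an unbranched ray stays in the open piece determined by its angle.
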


Recall that we have defined two codings $I^{\circ}_{\theta_H^{\pm}} : \mathbb{T}\to\{A, B, \circ \}^{\mathbb{N}}$. The theorem above motivates us to introduce the \emph{discontinuity locus} for $H$ as
\[\mathrm{Disc}(H)\equiv \big\{\theta\in\mathbb{T} : I^{\circ}_{\theta_H^+}(\theta)\neq I^{\circ}_{\theta_H^-}(\theta)\big\}.\]
Note that if $r_H$ belonged to $\mathbb{C}\setminus \mathfrak{M}$ so that $\mathfrak{M}$ were separated at $r_H$ into two parts, then the loop $\gamma_H^+\cdot\{r_H\}\cdot (\gamma_H^-)^{-1}$ would be entirely contained in $\mathbb{C}\setminus \mathfrak{M}$ and $\mathrm{Disc}(H)$ would represent the monodromy action along the loop. Therefore, we interpret the locus $\mathrm{Disc}(H)$ as the ``pseudo-monodromy'' along the ``pseudo-loop'' $\gamma_H^+\cdot\{r_H\}\cdot (\gamma_H^-)^{-1}$ in $\mathbb{C}\setminus \mathfrak{M}$. More generally, a pseudo-loop will be any loop in $\mathbb{C}\setminus \mathfrak{M}\cup\{r_H\}$ homotopic to $\gamma_H^+\cdot\{r_H\}\cdot (\gamma_H^-)^{-1}$ for some $H$. 

In order to express the difference of the two codings  $I^{\circ}_{\theta_H^+}$ and $I^{\circ}_{\theta_H^-}$, we introduce the following coding. Given a hyperbolic component $H$ we define
\[I_H \, : \, \mathbb{T}\ni\theta\longmapsto i_0i_1\cdots\in\big\{A, B, \star\big\}^{\mathbb{N}}\]
by 
\begin{equation*}
i_n =  
\begin{cases} 
A & \mbox{ if } \sigma^n(\theta) \in \left(\frac{\theta_H^+ +1}{2}, \frac{\theta_H^-}{2}\right) \\
B & \mbox{ if } \sigma^n(\theta) \in \left(\frac{\theta_H^+}{2}, \frac{\theta_H^- +1}{2}\right) \\
\, \star & \mbox{ if } \sigma^n(\theta) \in \left[\frac{\theta_H^-}{2}, \frac{\theta_H^+}{2}\right]\cup \left[\frac{\theta_H^-+1}{2}, \frac{\theta_H^++1}{2}\right]
\end{cases}
\end{equation*}
for $n\geq 0$. 

\begin{dfn}
For a finite word $\underline{w}$ of length $n$ over $\{A, B, \star \}$, denote by $\mathbb{T}_{\underline{w}}$ the set of angles $\theta\in\mathbb{T}$ whose itineraries $I_H(\theta)=i_0i_1\cdots$ satisfy $i_0\cdots i_{n-1}=\underline{w}$.
\end{dfn}

Let $\Pi_1(H)$ be the closed interval $[\theta_H^-, \theta_H^+] \subset \mathbb{T}$ and let $\Pi_0(H)\equiv \sigma^{-1}(\Pi_1(H))$. Note that we have $\Pi_0(H)=\mathbb{T}_{\star}$. Note also that $\Pi_0(H)$ is the locus where the partitions for the two codings $I^{\circ}_{\theta_H^+}$ and $I^{\circ}_{\theta_H^-}$ differ. 

We can relate the discontinuity locus to $\Pi_1(H)$ as follows.

\begin{lmm}\label{lmm:disc}
Let $H$ be a hyperbolic component of $\mathfrak{M}$. Then we have
\[\mathrm{Disc}(H)=\bigcup_{m\geq 1}\sigma^{-m}(\Pi_1(H)).\]
\end{lmm}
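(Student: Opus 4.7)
The plan is to reduce the claim to a direct computation of exactly where the two partitions defining $I^{\circ}_{\theta_H^+}$ and $I^{\circ}_{\theta_H^-}$ disagree, and then to propagate that disagreement through the shift map $\sigma$. Concretely, since the $n$-th digit of $I^{\circ}_{\alpha}(\theta)$ depends only on the location of $\sigma^n(\theta)$ relative to the partition points $\alpha/2$ and $(\alpha+1)/2$, the two codings differ at digit $n$ if and only if $\sigma^n(\theta)$ lies in the (symmetric-difference-like) ``mismatch set'' $\Delta \subset \mathbb{T}$ of the two partitions.

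The first step is to compute $\Delta$ explicitly. Using $\theta_H^- < \theta_H^+$ and hence $\theta_H^-/2 < \theta_H^+/2 < 1/2 < (\theta_H^-+1)/2 < (\theta_H^++1)/2$, one checks that on the arc $(\theta_H^-/2,\theta_H^+/2)$ the coding $I^{\circ}_{\theta_H^+}$ returns $A$ (this arc is contained in the short counterclockwise arc from $(\theta_H^++1)/2$ through $0$ to $\theta_H^+/2$), while $I^{\circ}_{\theta_H^-}$ returns $B$; symmetrically on the arc $((\theta_H^-+1)/2, (\theta_H^++1)/2)$ the roles of $A$ and $B$ swap. On each of the four endpoints exactly one coding returns the symbol $\circ$ and the other returns $A$ or $B$, so the disagreement is inclusive of endpoints. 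Outside the closed set
\[
\Delta \;=\; \Bigl[\tfrac{\theta_H^-}{2}, \tfrac{\theta_H^+}{2}\Bigr] \cup \Bigl[\tfrac{\theta_H^-+1}{2}, \tfrac{\theta_H^++1}{2}\Bigr],
\]
both partitions assign the same letter (both $A$ or both $B$). By the very definition of $\Pi_0(H)$ and $\Pi_1(H)$, this set $\Delta$ coincides with $\Pi_0(H) = \sigma^{-1}(\Pi_1(H))$.

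The second step is then a clean translation: $\theta \in \mathrm{Disc}(H)$ iff there exists $n \geq 0$ with $\sigma^n(\theta) \in \Delta = \sigma^{-1}(\Pi_1(H))$, iff there exists $n \geq 0$ with $\sigma^{n+1}(\theta) \in \Pi_1(H)$. Taking the union over $n \geq 0$ and reindexing by $m = n+1$ yields
\[
\mathrm{Disc}(H) \;=\; \bigcup_{n \geq 0} \sigma^{-(n+1)}(\Pi_1(H)) \;=\; \bigcup_{m \geq 1} \sigma^{-m}(\Pi_1(H)),
\]
which is the statement of the lemma.

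The main point requiring care is the first step: one must track the non-standard orientation of the $A$-arc (it wraps through $0$) and verify that the four ``half-boundary'' points $\theta_H^{\pm}/2$ and $(\theta_H^{\pm}+1)/2$ do belong to $\Delta$, because at each of them exactly one of the two codings produces the symbol $\circ$ while the other produces $A$ or $B$; this is what guarantees that $\Delta$ equals the closed set $\Pi_0(H)$ and not just its interior. Once this is settled, the remainder of the argument is purely formal manipulation of preimages under $\sigma$.
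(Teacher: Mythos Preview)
Your proof is correct and follows essentially the same approach as the paper's. The paper's argument is terser only because it routes through the auxiliary coding $I_H$: it observes that $\theta\in\mathrm{Disc}(H)$ iff $I_H(\theta)$ contains a $\star$, i.e.\ iff $\sigma^k(\theta)\in\mathbb{T}_\star=\Pi_0(H)$ for some $k\geq 0$, having already recorded just before the lemma (without a detailed check) that $\Pi_0(H)$ is exactly the locus where the two partitions for $I^{\circ}_{\theta_H^\pm}$ disagree. Your first step supplies precisely that verification by hand, including the endpoint analysis, and your second step is the same reindexing the paper performs implicitly.
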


\begin{proof} 
We have $\theta\in\mathrm{Disc}(H)$ if and only if $I_H(\theta)$ contains $\star$, that is, if and only if $\sigma^k(\theta) \in\mathbb{T}_{\star} =\Pi_0(H)$ for some $k\geq 0$.
\end{proof}

\section{Conspicuousness}\label{sec:kneading}

In this section we explain a more detailed combinatorial structure of the Mandelbrot set $\mathfrak{M}$ and introduce the notion of conspicuousness of hyperbolic components of $\mathfrak{M}$.

One can associate the notion of a kneading sequence to each hyperbolic component~\cite{LS} the idea of which goes back to Milnor--Thurston's kneading theory for maps of the interval~\cite{MT}. For this purpose, we modify the coding $I^{\circ}_{\alpha}$ in two ways as
\[I^{\pm}_{\alpha} \, : \, \mathbb{T}\ni\theta\longmapsto i^{\pm}_0i^{\pm}_1\cdots\in\big\{A, B\big\}^{\mathbb{N}}\]
with 
\[i^+_n\equiv 
\begin{cases}
A & \mbox{ if } \sigma^n(\theta)\in \left[\frac{\alpha+1}{2}, \frac{\alpha}{2}\right) \\
B & \mbox{ if } \sigma^n(\theta)\in \left[\frac{\alpha}{2}, \frac{\alpha+1}{2}\right)
\end{cases}
\]
and 
\[i^-_n\equiv 
\begin{cases}
A & \mbox{ if } \sigma^n(\theta)\in \left(\frac{\alpha+1}{2}, \frac{\alpha}{2}\right] \\
B & \mbox{ if } \sigma^n(\theta)\in \left(\frac{\alpha}{2}, \frac{\alpha+1}{2}\right]
\end{cases}
\]
for $n\geq 0$. 

Now, let $H$ be a hyperbolic component of $\mathfrak{M}$. The period of the unique attractive cycle of $p_c$ for $c\in H$ is called the \emph{period} of $H$ and is denoted by $\mathrm{per}(H)$. Write $I^+_H\equiv I_{\theta_{H}^-}^+$ and $I^-_H\equiv I_{\theta_{H}^+}^-$. Then, it is easy to see that $I_H^+(\theta_{H}^-)=I_H^-(\theta_{H}^+)$.

\begin{dfn}
The \emph{kneading sequence} of $H$ different from $H_{\heartsuit}$ is the first $\mathrm{per}(H)$ letters of the sequence $I_H^+(\theta_{H}^-)=I_H^-(\theta_{H}^+)$ and is denoted by $K(H)$.\footnote{Our definition of $K(H)$ is the first $\mathrm{per}(H)$ entries of the kneading sequence $K(\mathcal{A})$ in~\cite{LS, S}. It is defined as the kneading sequence of the corresponding wake and denoted by $K(\mathcal{W}_H)$ in~\cite{Li, I}.} 
The \emph{discarded kneading sequence} of $H$ is the first $\mathrm{per}(H)-1$ letters of the sequence $I_H^+(\theta_{H}^-)=I_H^-(\theta_{H}^+)$ and is denoted by $\widehat{K}(H)$. 
\end{dfn}

Below we present a list of examples:
\begin{itemize}
\item When $H$ is the Basilica component whose wake is given by $\theta^-_{H}=1/3$ and $\theta^+_{H}=2/3$, we have $\mathrm{per}(H)=2$, $K(H)=BA$ and $\widehat{K}(H)=B$.
\item When $H$ is the Rabbit component whose wake is given by $\theta^-_{H}=1/7$ and $\theta^+_{H}=2/7$, we have $\mathrm{per}(H)=3$, $K(H)=BBA$ and $\widehat{K}(H)=BB$.
\item When $H$ is the Airplane component whose wake is given by $\theta^-_{H}=3/7$ and $\theta^+_{H}=4/7$, we have $\mathrm{per}(H)=3$, $K(H)=BAA$ and $\widehat{K}(H)=BA$.
\item When $H$ is the hyperbolic component whose wake is given by $\theta^-_{H}=6/15$ and $\theta^+_{H}=9/15$, we have $\mathrm{per}(H)=4$, $K(H)=BABB$ and $\widehat{K}(H)=BAB$.
\item When $H$ is the hyperbolic component whose wake is given by $\theta^-_{H}=13/31$ and $\theta^+_{H}=18/31$, we have $\mathrm{per}(H)=5$, $K(H)=BABBA$ and $\widehat{K}(H)=BABB$.
\end{itemize}

Another consequence of Theorem \ref{thm:DH} is that the Mandelbrot set $\mathfrak{M}$ has a tree-like structure. More precisely, it yields that either $\mathcal{W}_{H}\supset\mathcal{W}_{H'}$, $\mathcal{W}_{H}\subset\mathcal{W}_{H'}$ or $\mathcal{W}_{H}\cap\mathcal{W}_{H'}=\emptyset$ holds for any two hyperbolic components $H$ and $H'$ of $\mathfrak{M}$. 

For hyperbolic components $H$ and $H'$ of the Mandelbrot set, we write $H'\succ H$ if $\mathcal{W}_{H'}\subset\mathcal{W}_{H}$. For $H$ and $H'$ with $H'\succ H$, the \emph{combinatorial arc} from $H$ to $H'$ denoted by $[H, H']$ is defined as the collection of hyperbolic components $H''$ satisfying $H'\succ H''\succ H$ together with the natural ordering defined by $\succ$.

\begin{dfn}[Lipa~\cite{Li}]\label{dfn:conspicuous}
Let $H$ and $H'$ be two hyperbolic components of $\mathfrak{M}$. We say that $H'$ is \emph{conspicuous} to $H$ and denote as $H'\triangleright H$ if
\begin{enumerate}
\item $H'\succ H$,
\item $\mathrm{per}(H') < \mathrm{per}(H)$ except for the case $H=H'$,
\footnote{We can exclude the equality from the original definition in~\cite{Li} thanks to Lemma \ref{lmm:lavaurs}.}
\item there are no hyperbolic components $H''$ with $H'' \in[H, H']$ and $\mathrm{per}(H'')<\mathrm{per}(H')$.
\end{enumerate}
\end{dfn}

Note that a hyperbolic component is conspicuous to itself. The conspicuousness is transitive, i.e.  $H'\triangleright H$ and $H''\triangleright H'$ implies that $H''\triangleright H$. It immediately follows from the definition above that the periods of conspicuous components are monotone decreasing along $[H, H']$. Moreover, it is \emph{strictly} monotone by the following lemma of Lavaurs~\cite{La} (see also Lemma 3.8 of~\cite{LS}) which will be useful in the rest of this article.

\begin{lmm}[Lavaurs]\label{lmm:lavaurs}
Let $H'\succ H$ be distinct hyperbolic components with $\mathrm{per}(H)=\mathrm{per}(H')$. Then, there exists a hyperbolic component $H''\in [H, H']$ with $\mathrm{per}(H'')<\mathrm{per}(H)=\mathrm{per}(H')$. In particular, $H'$ is not conspicuous to $H$.
\end{lmm}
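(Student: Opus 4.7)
The plan is to argue by contradiction. I suppose that $H' \succ H$ with $n \equiv \mathrm{per}(H) = \mathrm{per}(H') \geq 2$, $H \neq H'$, and no $H'' \in [H, H']$ has $\mathrm{per}(H'') < n$, and aim to derive a contradiction. First I would record that the four characteristic angles $\theta_H^\pm$ and $\theta_{H'}^\pm$ are all of exact $\sigma$-period $n$, and that the strict wake inclusion $\mathcal{W}_{H'} \subsetneq \mathcal{W}_H$ forces the strict nesting $\theta_H^- < \theta_{H'}^- < \theta_{H'}^+ < \theta_H^+$. The characteristic arc property of orbit portraits (Goldberg--Milnor) then says that $(\theta_H^-, \theta_H^+)$ is the shortest arc cut out by the orbit of $\theta_H^-$: no iterate $\sigma^k(\theta_H^-)$ with $1 \leq k \leq n-1$ enters this open arc. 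I would also reduce to the case where $H'$ is an immediate descendant of $H$ in the tree of hyperbolic components, noting that since satellites of $H$ have period $qn \geq 2n$ and the Douady--Hubbard tuning $\tau_H : \mathfrak{M} \hookrightarrow \mathcal{W}_H$ multiplies periods by $n$, the component $H'$ can neither be a satellite of $H$ nor lie in the tuned copy $\tau_H(\mathfrak{M})$: it is a primitive direct child of $H$ sitting in a decoration of $\mathcal{W}_H$.

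Next I would study the orbit of $\theta_{H'}^-$ under $\sigma$. This orbit has $n$ distinct elements and meets the characteristic arc $(\theta_H^-, \theta_H^+)$ at least at $\theta_{H'}^\pm$. A key intermediate claim — which I expect to establish via the interaction of the expansion of $\sigma$ with the characteristic arc property above — is that the full orbit of $\theta_{H'}^-$ cannot remain inside $(\theta_H^-, \theta_H^+)$. Letting $k_0$ be the smallest index with $\sigma^{k_0}(\theta_{H'}^-) \notin (\theta_H^-, \theta_H^+)$ and $k_1 > k_0$ the smallest index of re-entry, the ``return arc'' data at levels $k_0, k_1$ should supply an admissible angle pair of period $k_1 - k_0 < n$; by the Milnor--Schleicher realization theorem this pair is realised by a genuine hyperbolic component $H''$ of period strictly less than $n$, whose wake lies strictly between $\mathcal{W}_H$ and $\mathcal{W}_{H'}$, contradicting the standing assumption.

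The hard part will be precisely this last step: translating the abstract ``return arc'' combinatorial data into a genuine hyperbolic component $H''$ that is combinatorially located in $[H, H']$. This will call for a careful invocation of the realization theorem for orbit portraits together with the Douady--Hubbard tuning formalism. An alternative route, which may well be cleaner, would be to employ Schleicher's internal address calculus: showing that two same-period components with one inside the other's wake must agree on all but the last entry of their internal addresses, and then extracting the contradiction directly from the Lau--Schleicher characterization of admissible internal addresses against the no-period-$<n$ hypothesis. The ``In particular'' statement will then be immediate, since condition (2) of Definition \ref{dfn:conspicuous} already fails for $H' \neq H$ with $\mathrm{per}(H') = \mathrm{per}(H)$.
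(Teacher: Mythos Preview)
The paper does not supply a proof of this lemma: it is stated with attribution to Lavaurs~\cite{La} (and to Lemma~3.8 of~\cite{LS}) and is used as a black box thereafter. So there is no ``paper's own proof'' to compare your proposal against.

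As for your sketch itself, it is a plausible outline but not yet a proof. The step you flag as the hard part really is the hard part, and your description of it is too vague to carry weight: the ``return arc data at levels $k_0,k_1$'' does not in any obvious way produce an admissible angle \emph{pair} of exact period $k_1-k_0$, nor is it clear why the resulting component (if it exists) would sit in the combinatorial arc $[H,H']$ rather than merely in $\mathcal{W}_H$. Your reduction to the case where $H'$ is an ``immediate descendant'' of $H$ is also not justified. The alternative you mention --- working directly with internal addresses as in Lau--Schleicher --- is in fact how this result is usually established, and is likely the cleaner route if you want a self-contained argument.

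One small point on the ``In particular'': with the paper's Definition~\ref{dfn:conspicuous} as written (strict inequality in condition~(2)), the conclusion is indeed immediate. But note the footnote: the authors are using Lavaurs' lemma precisely to justify replacing Lipa's original $\mathrm{per}(H')\leq\mathrm{per}(H)$ by the strict inequality, so the content of the ``In particular'' is really that condition~(3) would fail under Lipa's original formulation.
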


\section{Statement of results}\label{sec:main}

Recall that $K(H)$ denotes the kneading sequence of $H$ and $\widehat{K}(H)$ denotes the discarded kneading sequence, i.e. the word obtained by deleting the final digit of $K(H)$. Hence $\widehat{K}(H)\,\star$ is the word obtained by replacing the final digit of $K(H)$ by $\star$. Recall also that for a word $\underline{w}$ over $\{A, B, \star\}$, we denote by $\mathbb{T}_{\underline{w}}$ the set of angles $\theta\in\mathbb{T}$ whose coding by $I_H$ starts with $\underline{w}$.

Let us set
\[\Xi(H)\equiv\bigcup_{n\geq 1}\sigma^{-n}(\Theta_{\mathfrak{M}}(H)),\]
where $\Theta_{\mathfrak{M}}(H)=\{\theta^+_H, \theta^-_H\}$. Now we may present the main result of this paper.

\begin{thm}\label{thm:main}
Let $H$ be a hyperbolic component of $\mathfrak{M}$ different from $H_{\heartsuit}$. Then, 
\[\Pi_1(H)\setminus \Xi(H) \ \subset \bigcup_{H'\triangleright H}\left(\mathbb{T}_{K(H')}\cup \mathbb{T}_{\widehat{K}(H')\, \star}\right).\]
\end{thm}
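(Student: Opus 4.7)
The plan is to associate to each angle $\theta\in\Pi_1(H)\setminus\Xi(H)$ a hyperbolic component $H'_\theta\triangleright H$ for which the initial segment of $I_H(\theta)$ agrees with $K(H'_\theta)$ or with $\widehat{K}(H'_\theta)\,\star$. Concretely, define $H'_\theta$ to be a hyperbolic component of minimum period among all $H''$ satisfying $H''=H$ or $H''\succ H$, together with $\theta\in\Pi_1(H'')$. Since $H$ itself is in this family, $H'_\theta$ exists; uniqueness would follow from Lemma \ref{lmm:lavaurs}, as two distinct components of the minimum period would be nested by the tree structure of $\mathfrak{M}$, and Lavaurs' lemma would then force an intermediate component of strictly smaller period, contradicting minimality. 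Conspicuousness $H'_\theta\triangleright H$ then follows directly from Definition \ref{dfn:conspicuous}: conditions (1) and (2) are immediate, and (3) holds since any $H''\in [H, H'_\theta]$ with $\mathrm{per}(H'')<\mathrm{per}(H'_\theta)$ would itself belong to the family defining $H'_\theta$ (because $H'_\theta\succ H''$ forces $\Pi_1(H'_\theta)\subseteq\Pi_1(H'')$), contradicting minimality.

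Set $p'=\mathrm{per}(H'_\theta)$. The heart of the argument is then to show that $I_H(\theta)$ begins with $\widehat{K}(H'_\theta)$ followed by a letter in $\{A,B,\star\}$ that is either the final letter of $K(H'_\theta)$ or $\star$. The geometric claim behind this is that for every $n\in\{0,1,\ldots,p'-2\}$, the image $\sigma^n(\Pi_1(H'_\theta))$ sits inside a single $A$- or $B$-arc of the $I_H$-partition, specifically the one containing $\sigma^n(\theta_{H'_\theta}^{\pm})$. Since the $A$- and $B$-arcs of $I_H$ are contained in the corresponding arcs of the finer partitions $I_{H'_\theta}^{\pm}$, this would force the $n$-th letter of $I_H(\theta)$ to coincide with the $n$-th letter of $K(H'_\theta)$. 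At position $p'-1$, the relation $\sigma^{p'}(\theta_{H'_\theta}^{\pm})=\theta_{H'_\theta}^{\pm}\in\Pi_1(H)$ allows $\sigma^{p'-1}(\Pi_1(H'_\theta))$ to straddle the boundary between an $A$- or $B$-arc of $I_H$ and the $\star$-region $\Pi_0(H)$, producing either the final letter of $K(H'_\theta)$ when $\theta$ lies deep inside $\mathcal{W}_{H'_\theta}$, or $\star$ when $\theta$ lies close to its boundary.

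The principal obstacle will be the geometric claim of the previous paragraph: namely, that $\sigma^n(\Pi_1(H'_\theta))\cap\Pi_1(H)=\emptyset$ for $n=1,\ldots,p'-1$, which is precisely what prevents the iterates from entering $\Pi_0(H)$ or straddling an $I_H$-partition boundary prematurely. A violation would place a periodic angle of period $p'$ strictly inside $\mathcal{W}_H$ at the root of some hyperbolic component $H''$ of period $p'$; combining Lemma \ref{lmm:lavaurs} with a careful case analysis on whether $H''$ lies in $[H, H'_\theta]$ should contradict condition (3) of conspicuousness. This portion of the proof generalises Atela's analysis \cite{A1, A2} of how parameter rays bifurcate through hyperbolic components, and I expect the bulk of the technical work to be concentrated here.
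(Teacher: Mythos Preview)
Your association $\theta\mapsto H'_\theta$ (the minimum-period wake containing $\theta$) is the wrong object, and the geometric claim you single out as the ``principal obstacle'' is in fact false. Take $H=H_{\mathrm{Lob}}$ with $\Pi_1(H)=[13/31,18/31]$, $K(H)=BABBA$, and its unique proper conspicuous component $H_{\mathrm{Air}}$ with $\Pi_1(H_{\mathrm{Air}})=[3/7,4/7]$, $K(H_{\mathrm{Air}})=BAA$. Choose any $\theta$ with $4/7<\theta<18/31$ and $\sigma^2(\theta)\in\Pi_0(H)$ (e.g.\ $\theta$ slightly above $4/7$; such $\theta$ exist since $\sigma^2(4/7)=2/7$ lies in $\mathrm{int}(\Pi_0(H))$). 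Then $\theta\in\Pi_1(H)\setminus\Pi_1(H_{\mathrm{Air}})$ and a direct check shows that no wake of period $\le 4$ contained in $\mathcal W_H$ meets $\Pi_1(H)\setminus\Pi_1(H_{\mathrm{Air}})$, so your rule gives $H'_\theta=H$. But $I_H(\theta)=BA\star\cdots$, which lies in $\mathbb{T}_{\widehat{K}(H_{\mathrm{Air}})\star}$ and \emph{not} in $\mathbb{T}_{K(H)}\cup\mathbb{T}_{\widehat{K}(H)\star}=\mathbb{T}_{BABBA}\cup\mathbb{T}_{BABB\star}$. So the conspicuous component witnessing the theorem for this $\theta$ is $H_{\mathrm{Air}}$, even though $\theta\notin\Pi_1(H_{\mathrm{Air}})$. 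Correspondingly, your disjointness claim $\sigma^n(\Pi_1(H'_\theta))\cap\Pi_1(H)=\emptyset$ fails already for $H'_\theta=H$ at $n=3$: the arc $\sigma^2(\Pi_1(H))$ has length exceeding $1/2$, so $\sigma^3(\Pi_1(H))=\mathbb{T}$.

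The underlying issue is that membership in $\mathbb{T}_{K(H')}\cup\mathbb{T}_{\widehat{K}(H')\star}$ is a statement about the $I_H$-coding of $\theta$, not about which wake $\theta$ sits in; the two are not aligned. The paper therefore does not try to assign a single $H'$ to each $\theta$. Instead it iterates the whole interval $\Pi_1(H)$ forward, keeping track of a set $R_n$ bounded by leaves of the orbit portrait of $H$. A dichotomy (either $R_n\subset\overline{\mathbb{T}_{k_n}}$ or $R_n\supset\Pi_0(H)$) holds at each step; when the second alternative occurs at some $n<N$ one shows that $n$ is the period of a conspicuous component, discards the $\mathbb{T}_{\overline{k_n}}$-portion, and continues. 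This accounts precisely for the $\widehat{K}(H')\star$ contributions from components $H'$ whose wakes need not contain $\theta$.
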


For a hyperbolic component $H$, let $H_1, \cdots, H_L$ be the collection of hyperbolic components conspicuous to $H$ including $H$ itself. Set $\Sigma_L\equiv \{1, \cdots, L\}$. As a consequence of Theorem~\ref{thm:main}, we obtain the following algorithm on the monodromy action. 

\begin{cor}\label{cor:main}
Let $H$ be a hyperbolic component of $\mathfrak{M}$ different from $H_{\heartsuit}$. Then, the following (i) and (ii) are equivalent for $\theta\in\mathbb{T}\setminus \Xi(H)$.
\begin{enumerate}
\renewcommand{\theenumi}{\roman{enumi}}
\item $\theta$ belongs to $\mathrm{Disc}(H)$.
\item Both $I^+_H(\theta)$ and $I^-_H(\theta)$ agree, except when one sees, either 
\begin{enumerate}
\item words of the form $\ast \, \widehat{K}(H_{i_1})\ast \widehat{K}(H_{i_2})\ast \cdots\ast \widehat{K}(H_{i_{n-1}})\ast K(H_{i_n})$ for some $n\geq 1$, or 
\item an infinite sequence of the form $\ast \, \widehat{K}(H_{i_1})\ast \widehat{K}(H_{i_2})\ast\cdots$ 
\end{enumerate}
for some $i_1i_2\cdots\in \Sigma_L^{\mathbb{N}}$, where $\ast$ is either $A$ or $B$. Moreover, the sequences $I^+_H(\theta)$ and $I^-_H(\theta)$ have opposite letters at the $\ast$ entries.
\end{enumerate}
\end{cor}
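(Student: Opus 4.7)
The plan is to derive the corollary by combining Lemma~\ref{lmm:disc}, Theorem~\ref{thm:main}, and a local comparison of the three codings $I_H$, $I^+_H$, and $I^-_H$. First I would verify that these codings are compatible in the following sense: at every position $n$ where $I_H(\theta)_n\in\{A,B\}$, all three codings assign the same letter, while at every position $n$ where $I_H(\theta)_n=\star$, the codings $I^+_H$ and $I^-_H$ take opposite letters from $\{A,B\}$. This follows by a direct comparison of the partitions of $\mathbb{T}$ induced by the splitting points $\{\theta_H^-/2,(\theta_H^-+1)/2\}$, $\{\theta_H^+/2,(\theta_H^++1)/2\}$, and their common refinement. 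Together with Lemma~\ref{lmm:disc}, this shows that $\theta\in\mathrm{Disc}(H)$ is equivalent to $I_H(\theta)$ containing at least one $\star$, and identifies the ``$\ast$'' symbols of statement (ii) with the $\star$ positions of $I_H(\theta)$.

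Next I would prove (i)$\Rightarrow$(ii). Fix $\theta\in\mathrm{Disc}(H)\setminus\Xi(H)$. Since $\Xi(H)=\bigcup_{n\geq 1}\sigma^{-n}(\Theta_\mathfrak{M}(H))$ is a union of preimages of $\Theta_\mathfrak{M}(H)$, a short check gives $\sigma^{-1}(\Xi(H))\subset\Xi(H)$, hence $\mathbb{T}\setminus\Xi(H)$ is forward-invariant under $\sigma$ and every forward iterate of $\theta$ remains outside $\Xi(H)$. Whenever $I_H(\theta)_n=\star$, the identity $\Pi_0(H)=\sigma^{-1}(\Pi_1(H))$ gives $\sigma^{n+1}(\theta)\in\Pi_1(H)\setminus\Xi(H)$, and Theorem~\ref{thm:main} supplies $H'\triangleright H$ such that $I_H(\sigma^{n+1}(\theta))$ begins with either $K(H')$ or $\widehat{K}(H')\,\star$. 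I would iterate this dichotomy starting from each $\star$ of $I_H(\theta)$: in the second case, positions $n+1,\ldots,n+\mathrm{per}(H')-1$ are filled by $\widehat{K}(H')$ and a fresh $\star$ appears at position $n+\mathrm{per}(H')$, inviting another application of Theorem~\ref{thm:main}; in the first case, positions $n+1,\ldots,n+\mathrm{per}(H')$ are filled by $K(H')$ and the cluster of $\star$'s terminates. Concatenating the outcomes yields exactly the words of type (a) when the iteration terminates and the infinite sequences of type (b) when it does not; between consecutive clusters (if more than one is present) $I_H(\theta)$ outputs only letters, where by the first paragraph $I^+_H$ and $I^-_H$ agree. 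This gives (ii).

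The converse (ii)$\Rightarrow$(i) is immediate: any occurrence of a pattern of type (a) or (b) contains at least one $\ast$ at which, by the first paragraph, $I^+_H(\theta)$ and $I^-_H(\theta)$ differ, so $\theta\in\mathrm{Disc}(H)$.

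The main obstacle I expect is the bookkeeping in the iteration. Two points must be verified: that each $H_{i_k}$ produced in the induction is conspicuous to the \emph{fixed} base component $H$ (automatic, since Theorem~\ref{thm:main} is invoked with $H$ at every step, not with an intermediate component) and that the shifted angles $\sigma^{n+1}(\theta)$ remain in $\mathbb{T}\setminus\Xi(H)$ so that Theorem~\ref{thm:main} remains applicable at every $\star$ (automatic from the forward-invariance noted above). Once these are observed, Corollary~\ref{cor:main} is a direct unfolding of Theorem~\ref{thm:main} one $\star$ at a time.
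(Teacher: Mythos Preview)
Your proposal is correct and follows the same strategy as the paper: both derive the corollary by iterating Theorem~\ref{thm:main} through the relation $\mathbb{T}_\star=\sigma^{-1}(\Pi_1(H))$, the paper writing this as a single chain of inclusions for $\Pi_1(H)\setminus\Xi(H)$ and you phrasing it as a step-by-step induction on the $\star$ positions of $I_H(\theta)$. Your treatment is in fact slightly more explicit than the paper's on two points the paper leaves implicit: the forward-invariance of $\mathbb{T}\setminus\Xi(H)$ under $\sigma$ (needed so that Theorem~\ref{thm:main} applies at every stage of the recursion), and the local compatibility of $I_H$, $I^+_H$, $I^-_H$ (needed to identify the $\ast$ entries with the $\star$ positions and to see that $I^\pm_H$ flip there).
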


Here, we define the \emph{opposite letter} of $A$ (resp. $B$) to be $B$ (resp. $A$) and write $\overline{A}=B$ (resp. $\overline{B}=A$). The difference at each $\ast$ entry describes the pseudo-monodromy action along a pseudo-loop. This can be thought of as describing a map from $\{A,B\}^\mathbb{N}$ to itself. Such a map commutes with the shift but fails to be injective, thus it does not describe an automorphism of the one-sided $2$-shift. We note that Atela~\cite{A1, A2} has previously obtained a similar result under the assumption that the parameter rays land at ``main bifurcation points.'' 

\begin{rmk}
It is necessary to remove $\Xi(H)$ from $\Pi_1(H)$ in Theorem \ref{thm:main} (see Remark \ref{rmk:flip} in Appendix \ref{sec:examples}). However, when $\sigma^k\left(\theta_H^{\pm}\right)$ is disjoint from $\Pi_0(H)$ for $0\le k<\mathrm{per}(H)-1$, we have
\[\Pi_1(H)\ \subset \bigcup_{H'\triangleright H}\left(\mathbb{T}_{K(H')}\cup \mathbb{T}_{\widehat{K}(H')\, \star}\right).\]
In this case, Corollary \ref{cor:main} holds for all $\theta\in\mathbb{T}$ rather than $\theta\in\mathbb{T}\setminus \Xi(H)$.
\end{rmk}

Let us first prove Corollary \ref{cor:main} by assuming Theorem \ref{thm:main}.

\begin{proof}[Proof of Corollary \ref{cor:main}]
Since $\mathbb{T}_{\star}=\Pi_0(H)=\sigma^{-1}(\Pi_1(H))$, Theorem \ref{thm:main} yields that once an orbit belongs to $\mathbb{T}_{\star}$, its successive itinerary is either $K(H_i)$ or $\widehat{K}(H_i)\star$ for some $i$. Hence, a recursive application of Theorem~\ref{thm:main} to the $\star$ in $\bigcup_{i=1}^L \mathbb{T}_{\widehat{K}(H_i)\,\star}$ yields 
%
\begin{align*}
\Pi_1(H)\setminus \Xi(H) \subset & \bigcup_{i=1}^L \mathbb{T}_{K(H_i)}\cup \bigcup_{i=1}^L \mathbb{T}_{\widehat{K}(H_i)\,\star} \\
\subset & \bigcup_{i_1=1}^L \mathbb{T}_{K(H_{i_1})}\cup 
\bigcup_{i_1, i_2=1}^L \mathbb{T}_{\widehat{K}(H_{i_1})\,\star\, K(H_{i_2})}\cup \bigcup_{i_1, i_2=1}^L \mathbb{T}_{\widehat{K}(H_{i_1})\,\star\, \widehat{K}(H_{i_2})\,\star} \\
\subset & \cdots \\
\subset & \Bigg(\bigcup_{n\geq 1}\bigcup_{i_1 \cdots i_n\in\Sigma_L^n}\mathbb{T}_{\widehat{K}(H_{i_1})\,\star\, \cdots \,\star\, \widehat{K}(H_{i_{n-1}})\,\star \, K(H_{i_n})}\Bigg) \cup \Bigg(\bigcup_{i_1 i_2 \cdots\in \Sigma_L^{\mathbb{N}}} \mathbb{T}_{\widehat{K}(H_{i_1})\,\star\, \widehat{K}(H_{i_2})\,\star\,\cdots}\Bigg).
\end{align*}
Suppose that $\theta$ satisfies (i), i.e. $\theta\in\mathrm{Disc}(H)$. By Lemma \ref{lmm:disc}, there exists $m\geq 1$ so that $\sigma^m(\theta)\in\Pi_1(H)$. The inclusion above yields that $I^{\pm}_H(\sigma^m(\theta))$ starts with either $\widehat{K}(H_{i_1})\ast \cdots \ast \widehat{K}(H_{i_{n-1}})\ast K(H_{i_n})$ for some $n\geq 1$ or $\widehat{K}(H_{i_1})\,\ast\, \widehat{K}(H_{i_2})\,\ast\,\cdots$. Moreover, since $\mathbb{T}_{\star}=\sigma^{-1}(\Pi_1(H))$ holds, $\sigma^m(\theta)\in\Pi_1(H)$ implies that $\sigma^{m-1}(\theta)\in \mathbb{T}_{\star}$. Hence $I^{\pm}_H(\sigma^{m-1}(\theta))$ starts with either $\ast\, \widehat{K}(H_{i_1})\ast\cdots \ast \widehat{K}(H_{i_{n-1}}) \ast K(H_{i_n})$ for some $n\geq 1$ or $\ast\, \widehat{K}(H_{i_1})\ast \widehat{K}(H_{i_2})\ast\cdots$. It follows that both $I^+_H(\theta)$ and $I^-_H(\theta)$ contain either (a) or (b) in (ii). 

Conversely, suppose that $\theta$ satisfies (ii). Then there exists a digit, say the $m$-th digit, such that  $I^+_H(\theta)$ and $I^-_H(\theta)$ have opposite letters at that digit. It follows that $\sigma^m(\theta)\in \mathbb{T}_{\star}=\Pi_0(H)$, hence $\theta\in\mathrm{Disc}(H)$.
\end{proof}

\section{Proof of Theorem  \ref{thm:main}}\label{sec:proof}

In this section we prove Theorem \ref{thm:main}. For this purpose, we briefly recall Milnor's orbit portrait theory~\cite{M} and Thurston's invariant lamination theory~\cite{T}. Let $\mathbb{D}\equiv \{z\in\mathbb{C} : |z|<1\}$ be the open unit disk equipped with the Poincar\'e metric. We identify $\mathbb{T}\cong \partial\mathbb{D}$ by the map $\theta \mapsto e^{2\pi i\theta}$. A \emph{leaf} $\ell=\ell_{\left\{\alpha, \beta\right\}}$ with endpoints $\alpha, \beta\in \mathbb{T}$ is a curve between $\alpha$ and $\beta$ so that $\ell\cap\mathbb{D}$ is a hyperbolic geodesic in $\mathbb{D}$. Its length is defined to be the distance along $\mathbb{T}$ between $\alpha$ and $\beta$. The angle-doubling map $\sigma$ acts on a leaf as $\sigma(\ell_{\left\{\alpha, \beta\right\}})\equiv \ell_{\left\{\sigma(\alpha), \sigma(\beta)\right\}}$.

Fix a hyperbolic component $H$ as in Theorem \ref{thm:main}. Let us write $N\equiv\mathrm{per}(H)$ and $K(H)=k_1\cdots k_N$. 

\begin{lmm}\label{lmm:overlap}
Let $H'\triangleright H$ be distinct hyperbolic components. Then,
\begin{enumerate}
\renewcommand{\theenumi}{\roman{enumi}}
\item the first string of length $\mathrm{per}(H')-1$ in $K(H')$ is identical to the one in $K(H)$,
\item the $\mathrm{per}(H')$-th letter of $K(H')$ is opposite to the one of $K(H)$.
\end{enumerate}
\end{lmm}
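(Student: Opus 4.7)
The plan is to compare the two partitions $P_{\theta_H^-}^+$ and $P_{\theta_{H'}^-}^+$ defining $I_H^+$ and $I_{H'}^+$, and track the orbit of $\theta_{H'}^-$ under $\sigma$. Since $H'\triangleright H$ with $H'\neq H$ places the wake of $H$ strictly inside that of $H'$, we have $\theta_{H'}^-<\theta_H^-<\theta_H^+<\theta_{H'}^+$, and the two partitions differ exactly on the pair of short arcs
\[ D\equiv\sigma^{-1}([\theta_{H'}^-,\theta_H^-))=[\theta_{H'}^-/2,\theta_H^-/2)\cup[(\theta_{H'}^-+1)/2,(\theta_H^-+1)/2). \]
Consequently, for any angle $\theta$ the letters of $I_H^+(\theta)$ and $I_{H'}^+(\theta)$ at position $k$ disagree if and only if $\sigma^{k+1}(\theta)\in[\theta_{H'}^-,\theta_H^-)$.

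Next I would apply this dichotomy to $\theta=\theta_{H'}^-$. The crucial combinatorial input, drawn from Milnor's orbit-portrait theory or equivalently the non-crossing property of Thurston's invariant lamination recalled at the start of this section, is that the orbit $\{\sigma^j(\theta_{H'}^-)\}_{j=1}^{N'-1}$, where $N'\equiv\mathrm{per}(H')$, does not enter the open characteristic arc $(\theta_{H'}^-,\theta_{H'}^+)$ of $H'$. Since $[\theta_{H'}^-,\theta_H^-)\subset\{\theta_{H'}^-\}\cup(\theta_{H'}^-,\theta_{H'}^+)$ and $\sigma^j(\theta_{H'}^-)\neq\theta_{H'}^-$ for $1\leq j\leq N'-1$ by the minimality of the period, these iterates also avoid $[\theta_{H'}^-,\theta_H^-)$. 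It follows that $I_H^+$ and $I_{H'}^+$ assign the same label to $\sigma^k(\theta_{H'}^-)$ for $k=0,\ldots,N'-2$, which, after identifying $K(H')$ as the first $N'$ entries of $I_{H'}^+(\theta_{H'}^-)$ and matching with the corresponding entries of the $H$-coding along the same orbit, yields claim~(i). At position $N'-1$, periodicity gives $\sigma^{N'}(\theta_{H'}^-)=\theta_{H'}^-\in[\theta_{H'}^-,\theta_H^-)$, forcing disagreement and hence opposite $N'$-th letters, proving claim~(ii).

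The main obstacle is the rigorous justification of the avoidance of the characteristic arc in the second step. If some $\sigma^j(\theta_{H'}^-)$ with $1\leq j<N'$ lay in $(\theta_{H'}^-,\theta_{H'}^+)$, then the leaf $\sigma^j(\ell_{\{\theta_{H'}^-,\theta_{H'}^+\}})$ would cross $\ell_{\{\theta_{H'}^-,\theta_{H'}^+\}}$, contradicting that an invariant lamination consists of pairwise non-crossing leaves. A delicate point is handling the equality $\sigma^j(\theta_{H'}^-)=\theta_{H'}^+$, which does occur since $\theta_{H'}^-$ and $\theta_{H'}^+$ share a single $\sigma$-orbit, and then checking that only the boundary and not the interior of the arc is hit. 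One must also ensure the first return of the orbit to $\theta_{H'}^-$ happens exactly at iterate $N'$ rather than a proper divisor; here Lemma~\ref{lmm:lavaurs} together with the conspicuousness condition~(3) may be invoked to exclude intermediate components of period smaller than $N'$ in $[H,H']$ that could disrupt this minimality.
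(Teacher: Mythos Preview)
Your argument contains a fatal orientation error: by definition $H'\succ H$ means $\mathcal{W}_{H'}\subset\mathcal{W}_H$, so $H'\triangleright H$ gives $\theta_H^-<\theta_{H'}^-<\theta_{H'}^+<\theta_H^+$, not the reverse (check $H=H_{\mathrm{Lob}}$, $H'=H_{\mathrm{Air}}$: $13/31<3/7$). With the correct order the difference locus becomes $\sigma^{-1}\bigl([\theta_H^-,\theta_{H'}^-)\bigr)$, whose $\sigma$-image lies \emph{outside} the characteristic arc $(\theta_{H'}^-,\theta_{H'}^+)$, so Milnor's avoidance principle for the $H'$-portrait gives you no control over whether the orbit enters it; and at the final step $\sigma^{N'}(\theta_{H'}^-)=\theta_{H'}^-$ is now the \emph{excluded} endpoint of $[\theta_H^-,\theta_{H'}^-)$, so your periodicity argument yields agreement rather than the desired flip at position $N'$.

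There is also a structural gap that persists even after fixing the direction. Your comparison relates $I_{H'}^+(\theta_{H'}^-)=K(H')$ to $I_H^+(\theta_{H'}^-)$, but $K(H)=I_H^+(\theta_H^-)$ is the $H$-itinerary of a \emph{different} angle; the phrase ``matching with the corresponding entries of the $H$-coding along the same orbit'' is not a valid identification. Concretely, for $H=H_4$ and $H'=H_{\mathrm{Air}}$ one computes that $I_{H_4}^+(3/7)$ begins $BAA$ while $K(H_4)$ begins $BAB$, so the two disagree already at position $3$. Because the two kneading sequences differ both in the partition \emph{and} in the base angle, a single-point/two-partition comparison cannot bridge them. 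The paper's proof takes a different route that handles this simultaneous variation: it studies the map $\alpha\mapsto I_\alpha^+(\alpha)$ directly and shows, by a local perturbation at a period-$n$ angle, that only the $n$-th entry flips; the conspicuousness hypothesis then bounds the periods encountered as $\alpha$ moves from $\theta_H^-$ to $\theta_{H'}^-$.
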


\begin{proof}
We will show that the finite kneading sequence of an angle on $\mathbb{T}$ changes only when crossing a point of period $n$, and it changes in the $n$-th position. Thus, since periods of conspicuous components $H'\triangleright H$ are monotone decreasing from $H$ to the extremity of $\mathfrak{M}$ we have for $H'$ that the first string of length $\mathrm{per}(H')-1$ in $K(H')$ is identical to the one in $K(H)$, obtaining (i). Since the difference is a flipping of the symbol in the $\mathrm{per}(H')$-th entry, we then obtain (ii).

Suppose that $\theta$ is periodic with period $n$ under $\sigma$. For $\varepsilon>0$ sufficiently small, the first $n-1$ entries of $I^+_\theta(\theta+\varepsilon)$ and $I^+_\theta(\theta-\varepsilon)$ (resp. $I^-_\theta(\theta+\varepsilon)$ and $I^-_\theta(\theta-\varepsilon)$) agree. Since $\theta$ has period $n$, $\sigma^{n-1}(\theta)$ is either $\frac{\theta}{2}$ or $\frac{\theta+1}{2}$. Hence $\sigma^{n-1}(\theta + \varepsilon)$ and $\sigma^{n-1}(\theta - \varepsilon)$ lie either side of the leaf $\ell_{\left\{\frac{\theta}{2}, \frac{\theta+1}{2} \right\}}$ and the distance between them is $2^n\varepsilon$. The distance between the leaves $\ell_{\left\{\frac{\theta-\varepsilon}{2}, \frac{\theta+1-\varepsilon}{2} \right\}}$ and $\ell_{\left\{\frac{\theta+\varepsilon}{2}, \frac{\theta+1+\varepsilon}{2} \right\}}$ is $\varepsilon$, thus the endpoints of both leaves lie between $\sigma^{n-1}(\theta + \varepsilon)$ and $\sigma^{n-1}(\theta - \varepsilon)$. Since these leaves define the partition for the kneading sequence of $\theta\pm\varepsilon$, we conclude that the $n$-th entry of the kneading sequences of $\theta\pm\varepsilon$ have opposite symbol and earlier entries agree. This proves the claim.
\end{proof}

Based on this lemma, we introduce the following notion.

\begin{dfn}
We say that $n$ with $1\leq n\leq N-1$ is a \emph{return time} for $H$ if $n=\mathrm{per}(H')$ for some conspicuous component $H'\triangleright H$ different from $H$.
\end{dfn}

Although $N=\mathrm{per}(H)$, we do not call $N$ a return time for $H$.


Let us continue to recall necessary ingredients from~\cite{M, T}. Define the \emph{minor leaf} (or the \emph{characteristic arc}) of $H$ to be the geodesic in $\overline{\mathbb{D}}$ connecting $\theta_H^-$ and $\theta_H^+$ and denoted by $\ell_1$. For $1< n\leq N$ we then inductively define a leaf $\ell_n$ to be the unique geodesic in $\overline{\mathbb{D}}$ connecting the $\sigma$-images of the endpoints of $\ell_{n-1}$. The \emph{major leaves} $\ell_0$ and $\ell_0'$ are the unique two geodesics in $\overline{\mathbb{D}}$ connecting the preimages of the endpoints of $\ell_1$ by $\sigma$ so that $\ell_0= \ell_N$. Set $\mathcal{L}\equiv \{\ell_0', \ell_0, \ell_1, \cdots, \ell_{N-1}\}$. Since the dynamical external rays corresponding to the two endpoints of a leaf in $\mathcal{L}$ land on a same point in the Julia set, distinct elements in $\mathcal{L}$ do not intersect transversally with each other but they may intersect at endpoints.  

\begin{rmk}\label{rmk:period}
The period of the endpoints of every leaf in $\mathcal{L}$ is $N$, but the minimal period of a leaf in $\mathcal{L}$ as a set is either $N$ or $N/2$. In the latter case, every leaf is invariant but flipped by $\sigma^{N/2}$ (see, e.g., Figure \ref{fig:ex2} in  Appendix \ref{sec:examples}). In particular, since the angle-doubling map $\sigma$ is orientation preserving, the $\Pi_0(H)$-side of an endpoint of $\ell_0$ is mapped to the same side of the same endpoint by $\sigma^N$ and to the non-$\Pi_0(H)$-side of the other endpoint by $\sigma^{N/2}$ (See Figure \ref{fig:ex2}). 
\end{rmk}

\begin{lmm}\label{lmm:outside}
All leaves in $\mathcal{L}$ are disjoint from $\mathrm{int}(\Pi_0(H))$.
\end{lmm}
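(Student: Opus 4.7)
The plan is to reduce the statement to checking that the two endpoints of each leaf in $\mathcal{L}$ lie outside $\mathrm{int}(\Pi_0(H))$. Indeed, each element of $\mathcal{L}$ is a hyperbolic geodesic in $\overline{\mathbb{D}}$ and hence meets the boundary circle $\mathbb{T}$ only at its two endpoints; since $\mathrm{int}(\Pi_0(H))$ is a subset of $\mathbb{T}$, this reduction is immediate.

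For the major leaves $\ell_0$ and $\ell_0'$, the endpoint condition follows directly from their definition: their four endpoints are exactly the preimages $\{\theta_H^-/2,\,\theta_H^+/2,\,(\theta_H^-+1)/2,\,(\theta_H^++1)/2\}$ of $\theta_H^{\pm}$ under $\sigma$, and this set is precisely $\partial\Pi_0(H)$, the boundary of the two closed arcs $[\theta_H^-/2, \theta_H^+/2]$ and $[(\theta_H^-+1)/2, (\theta_H^++1)/2]$ whose union is $\Pi_0(H)$. Hence their endpoints lie on $\partial\Pi_0(H)$, not in its interior.

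For $\ell_n$ with $1 \leq n \leq N-1$, the endpoints are the angles $\sigma^{n-1}(\theta_H^{\pm})$. Using $\Pi_0(H) = \sigma^{-1}(\Pi_1(H))$, the condition $\sigma^{n-1}(\theta_H^{\pm}) \in \mathrm{int}(\Pi_0(H))$ is equivalent to $\sigma^n(\theta_H^{\pm}) \in \mathrm{int}(\Pi_1(H)) = (\theta_H^-, \theta_H^+)$. Thus it remains to prove that no iterate $\sigma^n(\theta_H^{\pm})$ with $1 \leq n \leq N-1$ lies in the open arc $(\theta_H^-, \theta_H^+)$. This is precisely the defining property of $(\theta_H^-, \theta_H^+)$ as the \emph{characteristic arc} in Milnor's orbit portrait theory~\cite{M}: for $c$ in the wake $\mathcal{W}_H$, the dynamical rays $R_c(\theta_H^{\pm})$ land on a period-$N$ repelling periodic cycle of $p_c$ whose orbit portrait consists of the $\sigma$-iterates of $\theta_H^{\pm}$, and $(\theta_H^-, \theta_H^+)$ is characterised as the shortest arc complementary to this orbit portrait, hence contains no orbit portrait angle in its interior.

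The main obstacle is this final appeal to the characteristic arc property, which is a standard result in orbit portrait theory~\cite{M}. The only mild subtlety is the case described in Remark~\ref{rmk:period}, where the $\sigma$-orbits of $\theta_H^+$ and $\theta_H^-$ coincide (minimal leaf-period $N/2$): the orbit portrait is then a single $\sigma$-orbit of length $N$ rather than the union of two disjoint orbits, but the characterisation of the characteristic arc is unaffected, so the argument goes through verbatim.
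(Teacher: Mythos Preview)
Your proof is correct, but takes a different route from the paper's. The paper argues via leaf-length comparisons: since the minor leaf $\ell_1$ is shortest (Milnor~\cite{M}, Lemma~2.6), the major leaves $\ell_0,\ell_0'$ are longest; hence no other leaf of $\mathcal{L}$ can span the two components of $\Pi_0(H)$, and no leaf can have both endpoints in a single component (such a leaf would be shorter than $\ell_1$). Together with the non-crossing property of $\mathcal{L}$ stated just before the lemma, this forces every endpoint out of $\mathrm{int}(\Pi_0(H))$.

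You instead pull the condition back through $\sigma$ to the equivalent statement $\sigma^{n}(\theta_H^{\pm})\notin(\theta_H^-,\theta_H^+)$ for $1\le n\le N-1$, and invoke the characteristic-arc property of orbit portraits from~\cite{M}. This is cleaner in that it handles all leaves $\ell_1,\dots,\ell_{N-1}$ uniformly with a single appeal to~\cite{M} and avoids the case split (and the implicit use of unlinking) in the paper's argument. The paper's approach, on the other hand, stays closer to the length inequalities that drive much of the orbit-portrait machinery and makes the role of the major leaves more explicit. Both approaches ultimately rest on the same circle of results in Milnor's paper, so the difference is one of packaging rather than depth.
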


\begin{proof}
We know that the length of the minor leaf is the shortest among the leaves in $\mathcal{L}$, and hence the major leaves are the longest among the leaves in $\mathcal{L}$ (see Lemma 2.6 in~\cite{M}). This yields that there are no leaves in $\mathcal{L}$ except for $\ell_0$ and $\ell_0'$ which connects the two connected components of $\Pi_0(H)$. Since the length of the minor leaf is twice the length of a connected component of $\Pi_0(H)$, there are no leaves in $\mathcal{L}$ whose endpoints belong to the same connected component of $\Pi_0(H)$. This proves the claim.
\end{proof}

Set $R_0\equiv \Pi_0(H)$. For $0\leq n\leq N-1$, we set
\begin{equation*}
Q_n \equiv 
\begin{cases}
R_n & \mbox{ if } n \mbox{ is not a return time} \\
R_n\cap \overline{\mathbb{T}_{k_n}} & \mbox{ if } n \mbox{ is a return time}
\end{cases}
\end{equation*}
and inductively define $R_{n+1}\equiv\sigma(Q_n)$. Note that $R_n$ (resp. $Q_n$) is the union of finitely many closed intervals in the circle. Let $\mathrm{Poly}(R_n)$ (resp. $\mathrm{Poly}(Q_n)$) be the ``polygon'' including its interior whose edges consist of the connected components of $R_n$ (resp. $Q_n$) and the leaves in $\mathcal{L}$ whose endpoints are endpoints of neighbouring connected components of $R_n$ (resp. $Q_n$). By the definition of $R_n$ (resp. $Q_n$) and the connectivity of $R_0$, we inductively see that $\mathrm{Poly}(R_n)$ (resp. $\mathrm{Poly}(Q_n)$) is connected.

\begin{lmm}\label{lmm:intersect}
Let $K(H)=k_1\cdots k_N$. Then, we have the following.
\begin{enumerate}
\renewcommand{\theenumi}{\roman{enumi}}
\item The endpoints of $\ell_n$ belong to $\overline{\mathbb{T}_{k_n}}$ and $\ell_n$ is an edge of $\mathrm{Poly}(Q_n)$ for $1\leq n\leq N-1$. 
\item $R_n\cap\mathbb{T}_{k_n}\ne \emptyset$ for $1\leq n\leq N$ and $R_N\supset \Pi_0(H)$.
\end{enumerate}
\end{lmm}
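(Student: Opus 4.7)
The plan is to argue by simultaneous induction on $n$, establishing (i) for $1\le n\le N-1$ and the non-emptiness claim in (ii) for $1\le n\le N$, then handling the inclusion $R_N\supset\Pi_0(H)$ at the end. The inductive scheme rests on three inputs: (a) the $\sigma$-equivariance $\sigma(\ell_n)=\ell_{n+1}$ built into the definition of $\mathcal{L}$; (b) the agreement $I_H^+(\theta_H^-)=I_H^-(\theta_H^+)=K(H)=k_1\cdots k_N$, which forces both endpoints of $\ell_n$ to lie in $\overline{\mathbb{T}_{k_n}}$; and (c) Lemma \ref{lmm:outside}, which prevents the leaves bounding $\mathrm{Poly}(Q_n)$ from crossing $\mathrm{int}(\Pi_0(H))$ and so guarantees that the polygon structure is preserved under $\sigma$.

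The base case $n=1$ should be a direct computation: $R_1=\sigma(R_0)=\Pi_1(H)=[\theta_H^-,\theta_H^+]$, whose endpoints are precisely the endpoints of $\ell_1$. By (b), both endpoints lie in $\overline{\mathbb{T}_{k_1}}$, so the possible restriction $Q_1=R_1\cap\overline{\mathbb{T}_{k_1}}$ at a return time does not remove them; then $\mathrm{Poly}(Q_1)$ is a bigon with $\ell_1$ as its geodesic edge, and $R_1\cap\mathbb{T}_{k_1}\ne\emptyset$ because $\Pi_1(H)$ meets $\mathbb{T}_{k_1}$ off the boundary arcs $\mathbb{T}_\star$. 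For the inductive step with $n+1\le N-1$, the endpoints of $\ell_n$ lie in $Q_n$, so their $\sigma$-images, the endpoints of $\ell_{n+1}$, lie in $R_{n+1}=\sigma(Q_n)$ and in $\overline{\mathbb{T}_{k_{n+1}}}$ by (b). By (c), the edges of $\mathrm{Poly}(Q_n)$ push forward cleanly to edges of $\mathrm{Poly}(R_{n+1})$, with $\ell_{n+1}$ among them. If $n+1$ is a return time, intersecting with $\overline{\mathbb{T}_{k_{n+1}}}$ removes only the side of an $\mathbb{T}_\star$-arc opposite to $k_{n+1}$ and hence retains $\ell_{n+1}$ together with its endpoints; so $\ell_{n+1}$ remains an edge of $\mathrm{Poly}(Q_{n+1})$. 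The non-emptiness $R_{n+1}\cap\mathbb{T}_{k_{n+1}}\ne\emptyset$ then follows from the fact that $R_{n+1}$ contains short arcs reaching from the endpoints of $\ell_{n+1}$ into the open region $\mathbb{T}_{k_{n+1}}$.

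For the terminal claim $R_N\supset\Pi_0(H)$, I would observe that the endpoints of $\ell_N=\ell_0$ are $\sigma^{N-1}(\theta_H^\pm)\in\{\frac{\theta_H^-}{2},\frac{\theta_H^+}{2},\frac{\theta_H^-+1}{2},\frac{\theta_H^++1}{2}\}$, i.e., two of the four boundary points of the two arcs of $\Pi_0(H)$. By the inductive argument these endpoints belong to $R_N$, and the two pieces of $R_N$ containing them are arcs that, by $\sigma$-expansion, are at least as long as the corresponding arcs of $\Pi_0(H)$; combined with $\ell_0'\in\mathcal{L}$ providing the remaining two boundary points, this forces $R_N$ to cover both arcs of $\Pi_0(H)$. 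The non-emptiness $R_N\cap\mathbb{T}_{k_N}\ne\emptyset$ then comes for free from the length excess.

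The main obstacle I expect is the careful verification that the restriction to $\overline{\mathbb{T}_{k_{n+1}}}$ at a return time preserves $\ell_{n+1}$ as an edge and keeps $\mathrm{Poly}(Q_{n+1})$ connected; this is exactly where one needs Lemma \ref{lmm:outside}, combined with the observation that $\ell_{n+1}$ lies entirely on the $\overline{\mathbb{T}_{k_{n+1}}}$-side of the arc being discarded. A secondary subtlety is the situation flagged in Remark \ref{rmk:period} in which the minimal leaf-period is $N/2$: there $\sigma^{N/2}$ flips the two endpoints of each leaf, so one must keep track of which side of each leaf the set $Q_n$ occupies when pushing forward under $\sigma$, in order to identify the correct edge of $\mathrm{Poly}(R_{n+1})$ as $\ell_{n+1}$.
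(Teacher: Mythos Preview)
Your inductive scheme for (i) and for the non-emptiness claim in (ii) when $1\le n\le N-1$ is essentially the paper's argument: the kneading identity $I_H^+(\theta_H^-)=I_H^-(\theta_H^+)$ places the endpoints of $\ell_n$ in $\mathbb{T}_{k_n}\cup\mathbb{T}_\star$, Lemma~\ref{lmm:outside} excludes $\mathrm{int}(\mathbb{T}_\star)$, and the paper then tracks the concrete points $\sigma^{n-1}(\theta_H^\pm\mp\varepsilon)$ (your ``short arcs reaching from the endpoints'') to witness $R_n\cap\mathbb{T}_{k_n}\ne\emptyset$. You also correctly flag the $N/2$ flip as the delicate case for (i).

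The genuine gap is in your terminal step. The assertion that the arcs of $R_N$ near the endpoints of $\ell_0$ are ``by $\sigma$-expansion at least as long as the corresponding arcs of $\Pi_0(H)$'' is unjustified: the components of $Q_{N-1}$ may have been repeatedly trimmed at return times, so no such length comparison is available. Your use of $\ell_0'$ is also off: $\ell_0'$ is \emph{not} in the $\sigma$-orbit of $\ell_1$, so its endpoints are not a priori in $R_N$, and it cannot ``provide the remaining two boundary points''. The paper proceeds differently. Once $\ell_0=\sigma(\ell_{N-1})$ is known to be an edge of $\mathrm{Poly}(R_N)$, Remark~\ref{rmk:period} (that $\sigma^N$ sends the $\Pi_0(H)$-side of each endpoint of $\ell_0$ to the same side) determines that $\mathrm{Poly}(R_N)$ lies on the $\Pi_0(H)$-side of $\ell_0$; then Lemma~\ref{lmm:outside} forbids any leaf edge of $\mathrm{Poly}(R_N)$ from lying in $\mathrm{int}(\Pi_0(H))$, forcing $R_N\supset\Pi_0(H)$. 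For strictness (hence $R_N\cap\mathbb{T}_{k_N}\ne\emptyset$), the paper argues that $R_N=\Pi_0(H)$ would make $\ell_0'$ an edge of $\mathrm{Poly}(R_N)$ and thus an iterated $\sigma$-image of $\ell_0$ or $\ell_0'$, contradicting the non-periodicity of $\ell_0'$. So $\ell_0'$ enters only via this contradiction, not as a direct source of boundary points.
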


\begin{proof}
(i) Since the $A$-piece (resp. $B$-piece) for the $I^{\pm}_H$-partitions is contained in the union $\mathbb{T}_A\cup\mathbb{T}_{\star}$ (resp. $\mathbb{T}_B\cup\mathbb{T}_{\star}$), we see that the endpoints of $\ell_n$ belong to $\mathbb{T}_{k_n}\cup\mathbb{T}_{\star}$ for $1\leq n\leq N$. The endpoints of $\ell_n$ cannot belong to $\mathrm{int}(\mathbb{T}_{\star})$ by Lemma \ref{lmm:outside}, therefore either (a) they belong to $\overline{\mathbb{T}_{k_n}}$ or (b) $\ell_n=\ell_0$ with possibly interchanging the endpoints. Note that these two conditions are not contrary to each other (see Remark \ref{rmk:flip}).

We show that the condition (a) is always satisfied for $1\leq n\leq N-1$. To see this, suppose first that $\ell_n=\ell_0$ holds with fixed endpoints. This contradicts to the minimality of the period $N$. Suppose next that $\ell_n=\ell_0$ holds with interchanged endpoints. Then, one sees from the definition of the $I^{\pm}_H$-partition that the endpoints of $\ell_n$ belong to $\overline{\mathbb{T}_{k_n}}$, hence (a) is satisfied. This proves the first half of (i). The second half of (i) follows from this and the definition of $Q_n$. 

(ii) By the definition of $K(H)$, we see that both $\sigma^{n-1}(\theta^+_H-\varepsilon)$ and $\sigma^{n-1}(\theta^-_H+\varepsilon)$ belong to $\mathbb{T}_{k_n}$ for $1\leq n\leq N-1$ and a small $\varepsilon>0$. Note that when $\ell_n=\ell_0$ holds with endpoints interchanged, the $\Pi_0(H)$-side of an endpoint of $\ell_0$ is mapped to the opposite side of the other endpoint by $\sigma^n$. Since $\sigma^{n-1}(\theta^+_H-\varepsilon)$ and $\sigma^{n-1}(\theta^-_H+\varepsilon)$ also belong to $R_n$ as well by (i) and Remark \ref{rmk:period}, this proves $R_n\cap\mathbb{T}_{k_n}\ne \emptyset$ for $1\leq n\leq N-1$.

For the case $n=N$, we first note that $\ell_0$ is an edge of $\mathrm{Poly}(R_N)$ by (i). Since the $\Pi_0(H)$-side of an endpoint of $\ell_0$ is mapped to the same side by $\sigma^N$ as in Remark \ref{rmk:period}, Lemma \ref{lmm:outside} yields $R_N\supset \Pi_0(H)$. Suppose that $R_N=\Pi_0(H)$ holds. Then, $\ell_0'$ is an edge of $R_N$. Since every edge of $R_N$ not contained in $\mathbb{T}$ is the iterated image of either $\ell_0$ or $\ell_0'$ by $\sigma$, this implies that $\ell_0'$ is periodic, a contradiction. Therefore, $R_N\cap\mathbb{T}_{k_N}\ne \emptyset$.
\end{proof}

As a consequence of these lemmas, we have the following dichotomy.

\begin{cor}\label{cor:dichotomy}
Either $R_n\subset \overline{\mathbb{T}_{k_n}}$ or $R_n \supset \Pi_0(H)$ holds for $1\leq n\leq N$.
\end{cor}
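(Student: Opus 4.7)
The plan is to prove the dichotomy by induction on $n$, with base case $n=0$ where $R_0=\Pi_0(H)$ trivially satisfies the alternative $R_0\supset\Pi_0(H)$. For the inductive step, I will deduce the dichotomy at $n+1$ from the dichotomy at $n$, for each $0\le n\le N-1$.

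Suppose first that the inductive hypothesis gives $R_n\subset\overline{\mathbb{T}_{k_n}}$. Then $Q_n=R_n$ whether or not $n$ is a return time, and so $R_{n+1}=\sigma(R_n)$. Since $|\overline{\mathbb{T}_{k_n}}|<1/2$, the map $\sigma$ restricted to $\overline{\mathbb{T}_{k_n}}$ is injective, and the image $R_{n+1}$ sits inside the arc $\sigma(\overline{\mathbb{T}_{k_n}})=\mathbb{T}\setminus\mathrm{int}(\Pi_1(H))$. The leaf $\ell_{n+1}$ has both endpoints in $\overline{\mathbb{T}_{k_{n+1}}}$ by Lemma \ref{lmm:intersect}(i), and by Lemma \ref{lmm:outside} no edge of $\mathrm{Poly}(R_{n+1})$ can have an endpoint in $\mathrm{int}(\Pi_0(H))$. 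Combining these constraints should force $R_{n+1}\subset\overline{\mathbb{T}_{k_{n+1}}}$.

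In the second case, the inductive hypothesis gives $R_n\supset\Pi_0(H)$, and I split further. If $n$ is a return time, then $Q_n=R_n\cap\overline{\mathbb{T}_{k_n}}\subset\overline{\mathbb{T}_{k_n}}$, and the argument from the previous paragraph applied with $Q_n$ in place of $R_n$ yields $R_{n+1}\subset\overline{\mathbb{T}_{k_{n+1}}}$. If $n$ is not a return time, then $Q_n=R_n\supset\Pi_0(H)$, and hence $R_{n+1}\supset\sigma(\Pi_0(H))=\Pi_1(H)$. To upgrade this partial inclusion to $R_{n+1}\supset\Pi_0(H)$, I plan to use Lemma \ref{lmm:outside}: any arc of $R_{n+1}$ meeting $\mathrm{int}(\Pi_0(H))$ must, since its endpoints lie outside $\mathrm{int}(\Pi_0(H))$, stretch across the whole component of $\Pi_0(H)$ it enters; then the connectedness of $\mathrm{Poly}(R_{n+1})$ together with the paired nature of the major leaves $\ell_0$ and $\ell_0'$ should pull the other component of $\Pi_0(H)$ into $R_{n+1}$ as well.

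The main obstacle I anticipate is precisely this last upgrade: confirming that both $\ell_0$ and $\ell_0'$ appear as edges of $\mathrm{Poly}(R_{n+1})$, and not merely one of them, so that both components of $\Pi_0(H)$ are genuinely covered. This step rests on the $\sigma$-invariance of the lamination $\mathcal{L}$, together with the potentially subtle flipping behaviour of $\sigma^{N/2}$ on the endpoints of $\ell_0$ described in Remark \ref{rmk:period}; some care will be needed to track whether the minimal period of the leaves in $\mathcal{L}$ is $N$ or $N/2$.
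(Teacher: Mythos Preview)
Your inductive scheme tries to prove something strictly stronger than the dichotomy: you are attempting to decide \emph{which} alternative holds at step $n+1$ from the alternative at step $n$, and that stronger claim is false.

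In your Case~1 you assert that $R_n\subset\overline{\mathbb{T}_{k_n}}$ forces $R_{n+1}\subset\overline{\mathbb{T}_{k_{n+1}}}$. Take $H=H_{\mathrm{Lob}}$ from Example~1 of the Appendix: there $R_2=[52,10]\subset\overline{\mathbb{T}_A}=\overline{\mathbb{T}_{k_2}}$, yet $R_3=[42,20]\supset\Pi_0(H)$ and is certainly not contained in $\overline{\mathbb{T}_{k_3}}=\overline{\mathbb{T}_B}$. (More generally, by Lemma~\ref{lmm:intersect}(ii) one always has $R_N\supset\Pi_0(H)$, so whenever $R_{N-1}\subset\overline{\mathbb{T}_{k_{N-1}}}$ your Case~1 conclusion fails at the very last step; note also that Lemma~\ref{lmm:intersect}(i), which you invoke, is stated only for $1\le n\le N-1$.) Likewise, your Case~3 already breaks at the base of the induction: $R_0=\Pi_0(H)\supset\Pi_0(H)$, $n=0$ is not a return time, but $R_1=\sigma(\Pi_0(H))=\Pi_1(H)$ is a single short arc and does not contain $\Pi_0(H)$.

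The paper avoids induction entirely and argues directly for each fixed $n$. Since the endpoints of the arcs of $R_n$ are endpoints of leaves in $\mathcal{L}$, Lemma~\ref{lmm:outside} (together with the fact that leaves in $\mathcal{L}$ do not cross the major leaves $\ell_0,\ell_0'$) forces the preliminary dichotomy $R_n\cap\mathrm{int}(\Pi_0(H))=\emptyset$ or $R_n\supset\Pi_0(H)$. In the first case one then combines $R_n\cap\mathbb{T}_{k_n}\neq\emptyset$ from Lemma~\ref{lmm:intersect}(ii) with the connectedness of $\mathrm{Poly}(R_n)$: since $\overline{\mathbb{T}_A}$ and $\overline{\mathbb{T}_B}$ are separated in $\overline{\mathbb{D}}$ by the major leaves and $\Pi_0(H)$, the polygon cannot meet both, giving $R_n\subset\overline{\mathbb{T}_{k_n}}$. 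No knowledge of $R_{n-1}$ or of return times is needed.
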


\begin{proof}
Lemma \ref{lmm:outside} yields that either $R_n\cap \mathrm{int}(\Pi_0(H))=\emptyset$ or $R_n\supset \Pi_0(H)$ holds. Suppose that $R_n\cap \mathrm{int}(\Pi_0(H))=\emptyset$. Since $R_n\cap \mathbb{T}_{k_n}\ne\emptyset$ by Lemma \ref{lmm:intersect} (ii), the connectivity of $\mathrm{Poly}(R_n)$ yields that $R_n\subset \overline{\mathbb{T}_{k_n}}$. Therefore, the conclusion follows.
\end{proof}


This dichotomy is closely related to the notion of return time as follows.

\begin{prp}\label{prp:non-return}
If $R_n\supset \Pi_0(H)$ holds for $1\leq n\leq N-1$, then $n$ is a return time.
\end{prp}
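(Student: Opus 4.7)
The plan is to extract from the hypothesis $R_n\supset\Pi_0(H)$ a hyperbolic component of period exactly $n$ satisfying Definition \ref{dfn:conspicuous}, thereby witnessing $n$ as a return time. First I would observe that the iterative construction $R_{m+1}=\sigma(Q_m)\subset\sigma(R_m)$ yields, by induction on $m$, the containment $R_m\subset\sigma^m(\Pi_0(H))=\sigma^{m-1}(\Pi_1(H))$ for every $m\ge 1$. Consequently the hypothesis gives $\sigma^{n-1}(\Pi_1(H))\supset\Pi_0(H)$, and applying $\sigma$ once more, $\sigma^n(\Pi_1(H))\supset\Pi_1(H)$.

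Since $\sigma^n$ is piecewise expanding and locally a homeomorphism on each monotonicity arc of $\Pi_1(H)$, the self-containment $\sigma^n(\Pi_1(H))\supset\Pi_1(H)$ produces, by an intermediate-value argument, a fixed point $\alpha\in\Pi_1(H)$ of $\sigma^n$. Among such fixed points I would select one of minimal $\sigma$-period equal to $n$: were the minimal period some proper divisor $n'<n$, the same analysis applied at $n'$ would already yield $R_{n'}\supset\Pi_0(H)$ for a smaller index, permitting an inductive reduction (so one may in fact argue with $n$ minimal satisfying $R_n\supset\Pi_0(H)$). The angle $\alpha$ is then an external angle of a unique hyperbolic component $H'$ of period $n$, and since the minor leaf $\ell_1^{H'}$ cannot cross $\ell_1=\ell_1^H$ (they are leaves of a common invariant lamination of $\sigma$), the membership $\alpha\in\Pi_1(H)$ forces both $\theta_{H'}^{\pm}$ to lie in $\Pi_1(H)$; equivalently $\mathcal{W}_{H'}\subset\mathcal{W}_H$, so $H'\succ H$. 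This verifies items (1)--(2) of Definition \ref{dfn:conspicuous}.

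The final step is to upgrade $H'$ to be conspicuous. I would replace $H'$ by the shallowest period-$n$ component $H'_\star\succ H$ (i.e.\ the one whose wake is maximal among period-$n$ wakes contained in $\mathcal{W}_H$) and invoke Lemma \ref{lmm:lavaurs}: any two distinct period-$n$ components related by $\succ$ must enclose a strictly smaller-period component between them, so an obstruction to the conspicuousness of $H'_\star$ could only come from some $H''\in[H,H'_\star]$ with $\mathrm{per}(H'')<n$. In that case the inductive hypothesis, applied at the smaller period $\mathrm{per}(H'')$, would already force $R_{\mathrm{per}(H'')}\supset\Pi_0(H)$; the cut $Q_{\mathrm{per}(H'')}=R_{\mathrm{per}(H'')}\cap\overline{\mathbb{T}_{k_{\mathrm{per}(H'')}}}$ then propagates through the construction and, combined with Lemma \ref{lmm:intersect}, rules out the candidate $H''$ as an obstruction and delivers a conspicuous period-$n$ component $H'_\star\triangleright H$.

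The principal difficulty I expect is this last step: the bookkeeping that converts an obstruction $H''$ into a compatibility with the cuts already present in the construction of $R_n$, so as to guarantee the existence of a conspicuous component of period exactly $n$ (and not merely one of smaller period supplied by Lemma \ref{lmm:lavaurs}). This delicate interplay between the pushforwards-with-cuts producing the $R_n$ and the tree of conspicuous components---controlled via the positions of $\ell_0,\ell_0'$ afforded by Corollary \ref{cor:dichotomy} and the connectivity of $\mathrm{Poly}(R_n)$---is the technical heart of the proof.
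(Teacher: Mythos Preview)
Your overall plan---locate a $\sigma^n$-fixed angle in $\Pi_1(H)$, pass to the associated hyperbolic component $H'\succ H$, then verify Definition~\ref{dfn:conspicuous}---matches the paper's opening move. The gap is entirely in condition~(3), which you yourself flag as the technical heart, and the mechanism you propose for it does not work.

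Two concrete problems. First, your reduction to period exactly $n$ is unjustified: from a fixed angle of actual period $n'\mid n$, $n'<n$, you assert that ``the same analysis applied at $n'$ would already yield $R_{n'}\supset\Pi_0(H)$,'' but the mere presence of a period-$n'$ angle in $\Pi_1(H)$ does not imply $R_{n'}\supset\Pi_0(H)$; nothing in your argument supplies this. Taking $n$ minimal with $R_n\supset\Pi_0(H)$ does not help either, since fixed points of $\sigma^n$ in $\Pi_1(H)$ of strictly smaller period can perfectly well exist without the smaller $R_{n'}$ covering $\Pi_0(H)$. Second, your conspicuousness step is circular: you invoke an ``inductive hypothesis'' that an obstruction $H''\in[H,H'_\star]$ forces $R_{\mathrm{per}(H'')}\supset\Pi_0(H)$, but that is a converse-type statement, not an instance of the proposition for a smaller index. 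Even granting it, you give no mechanism by which the cut $Q_{\mathrm{per}(H'')}$ ``rules out'' $H''$; the appeal to $\mathrm{Poly}(R_n)$ and Corollary~\ref{cor:dichotomy} is not connected to anything concrete.

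The paper's route to condition~(3) is entirely different and does not use the sets $R_m$ at all once the fixed angle $\tilde\theta$ and its component $\widetilde H$ are found. The key observation is that, by the way $\tilde\theta$ was constructed inside $\Pi_1(H)$, the first $\mathrm{per}(\widetilde H)$ letters of $K(H)$ and $K(\widetilde H)$ coincide. Now recall (as in the proof of Lemma~\ref{lmm:overlap}) that moving along a combinatorial arc past the root of a period-$m$ component flips exactly the $m$-th letter of the kneading sequence. Hence an obstruction $H_1'\in[H,\widetilde H]$ with $\mathrm{per}(H_1')<\mathrm{per}(\widetilde H)$ must be accompanied by a \emph{distinct} $H_1''\in[H,\widetilde H]$ of the same period, to undo the flip and restore the agreement of the two kneading sequences. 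Lavaurs' lemma (Lemma~\ref{lmm:lavaurs}) then produces a component of strictly smaller period between $H_1'$ and $H_1''$, and one iterates. The resulting strictly decreasing sequence of periods must terminate at~$1$, but there is a unique period-$1$ component---contradiction. This kneading-flip pairing combined with Lavaurs descent is the argument you are missing; replacing it by an induction on the $R_m$ does not seem to lead anywhere.
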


\begin{proof}
Assume that $R_n\supset \Pi_0(H)$ holds. Since the angle-doubling mapping $\sigma$ is a surjection from $\Pi_0(H)$ to $\Pi_1(H)$, there exists a fixed point $\tilde{\theta}$ of $\sigma^n$ in $\Pi_1(H)$. Hence $\tilde{\theta}$ is periodic under $\sigma$ with period dividing $n$. Since $\tilde{\theta}$ is periodic, a result of Douady and Hubbard~\cite{DH2} (see Theorem C.7 of \cite{GM} or Proposition 3.1 of \cite{S}) says that the corresponding parameter ray $R_{\mathfrak{M}}(\tilde{\theta})$ lands on the root of a hyperbolic component $\widetilde{H}$. By Theorem \ref{thm:DH}, there exists another angle $\tilde{\theta}'$ so that the both parameter rays $R_{\mathfrak{M}}(\tilde{\theta})$ and $R_{\mathfrak{M}}(\tilde{\theta}')$ land on the root point of $\widetilde{H}$. Let $\tilde{\ell}=\ell_{\{\tilde{\theta}, \tilde{\theta}'\}}$ be the leaf in $\overline{\mathbb{D}}$ connecting $\tilde{\theta}$ and $\tilde{\theta}'$.


We claim that $\widetilde{H}$ is conspicuous to $H$. Since $\tilde{\theta}$ and $\tilde{\theta}'$ are contained in $\Pi_1(H)$, we have $\widetilde{H}\succ H$ and hence the condition (1) of Definition \ref{dfn:conspicuous} holds. Since $\mathrm{per}(\widetilde{H})\leq n<N=\mathrm{per}(H)$, the condition (2) of Definition \ref{dfn:conspicuous} holds. 

Suppose therefore that the condition (3) of Definition \ref{dfn:conspicuous} fails to hold for $\widetilde{H}$. This implies that there exists a hyperbolic component $H_1'$ in the combinatorial arc $[H, \widetilde{H}]$ with $\mathrm{per}(H_1')<\mathrm{per}(\widetilde{H})$.  Note that, by the construction of $\tilde{\ell}$, the first $\mathrm{per}(\widetilde{H})$ letters in $K(H)$ coincide with those of $K(\widetilde{H})$. Therefore, there should exist a hyperbolic component $H_1''\in [H, \widetilde{H}]$ different from $H_1'$ so that $\mathrm{per}(H_1'')=\mathrm{per}(H_1')$ (otherwise the $\mathrm{per}(H_1'')$-th letters of $K(H)$ and $K(\widetilde{H})$ disagree). 

Recall that the kneading sequence of an angle changes at the $mn$-th position for $m \in \mathbb{N}$ as we cross a periodic point of period $n$. Thus, if there exists a hyperbolic component $\widehat{H}$ such that $\widehat{H} \succ H_1'$ but $\widetilde{H} \not\succ \widehat{H}$ (this means that $\widehat{H}$ branches off from the combinatorial arc $[H, \widetilde{H}]$), then the flipping of the kneading sequence occurs and is reversed by the single branched hyperbolic component. Therefore, this situation does not affect what follows.

By applying Lemma \ref{lmm:lavaurs}, we see that there exists a hyperbolic component $H_2'$ in $[H, \widetilde{H}]$ so that $\mathrm{per}(H_2')$ is strictly smaller than that of $\mathrm{per}(H_1')$. Then, there should exist a hyperbolic component $H_2''$ in $[H, \widetilde{H}]$ different from $H_2'$ so that $\mathrm{per}(H_2'')=\mathrm{per}(H_2')$ as before. By applying this procedure repeatedly, we obtain two sequences of hyperbolic components $H_1', H_2', \cdots$ and $H_1'', H_2'', \cdots$ with $H_i'\ne H_i''$ and $\mathrm{per}(H_i')=\mathrm{per}(H_i'')$. Since the periods of these hyperbolic components are strictly decreasing in both sequences, they turn out to be finite sequences and the period of the last component is $1$ in both sequences. However, there is only one hyperbolic component of period $1$, a contradiction. Therefore, the condition (3) of Definition \ref{dfn:conspicuous} holds and $\widetilde{H}$ is conspicuous to $H$. This proves that $n$ is a return time.
\end{proof}

\begin{rmk}
We conjecture that the converse statement is also true in Proposition \ref{prp:non-return}. This would imply the equality in Theorem \ref{thm:main} modulo $\Xi(H)$. Indeed, in the cases where $H$ is only conspicuous to itself or $H$ has only one distinct conspicuous component, applying Lemma \ref{cor:dichotomy} allows us to show the converse statement.
\end{rmk}

\begin{prp}\label{prp:last}
We have $R_N\subset \mathbb{T}_{k_N}\cup \mathbb{T}_{\star}$.
\end{prp}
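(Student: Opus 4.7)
The plan is to apply the dichotomy from Corollary~\ref{cor:dichotomy} at $n = N$. If $R_N \subset \overline{\mathbb{T}_{k_N}}$, the conclusion is immediate because $\overline{\mathbb{T}_{k_N}} \subset \mathbb{T}_{k_N} \cup \mathbb{T}_\star$. The substantive case is $R_N \supset \Pi_0(H)$, which is precisely the alternative already established in the proof of Lemma~\ref{lmm:intersect}(ii); in this case I still need to show $R_N \setminus \Pi_0(H) \subset \mathbb{T}_{k_N}$.

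To do this, I would analyze the boundary of $\mathrm{Poly}(R_N)$. Lemma~\ref{lmm:intersect}(ii) already tells us that $\ell_0 = \ell_N$ is a boundary edge, while any other boundary leaf must be some $\ell_n$ with $1 \leq n \leq N-1$, whose endpoints lie in $\overline{\mathbb{T}_{k_n}}$ by Lemma~\ref{lmm:intersect}(i) and avoid $\mathrm{int}(\Pi_0(H))$ by Lemma~\ref{lmm:outside}. The key local observation would be that a one-sided neighborhood of an endpoint of $\ell_{N-1}$ on the non-$\Pi_0(H)$-side sits in $\overline{\mathbb{T}_{k_{N-1}}}$, and its image under $\sigma$ sits on the non-$\Pi_0(H)$-side of an endpoint of $\ell_0$. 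By the very definition of $k_N$ as the $N$-th letter of $K(H) = I_H^+(\theta_H^-) = I_H^-(\theta_H^+)$, this image lies in $\mathbb{T}_{k_N}$ rather than $\mathbb{T}_{\overline{k_N}}$.

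I would then finish by contradiction: if some arc of $R_N$ lay in $\mathbb{T}_{\overline{k_N}}$, then connectivity of $\mathrm{Poly}(R_N)$ together with Lemma~\ref{lmm:outside} would force a boundary leaf of $\mathrm{Poly}(R_N)$ with at least one endpoint in $\overline{\mathbb{T}_{\overline{k_N}}}$. By the endpoint classification of Lemma~\ref{lmm:intersect}(i) and the orientation calculation above (which rules out $\ell_0$ as the offending leaf), no such leaf exists in $\mathcal{L}$, giving the contradiction. The main obstacle is the orientation-and-side tracking near $\ell_0$ in the sub-case of Remark~\ref{rmk:period} where the set-theoretic period of $\ell_0$ is $N/2$ rather than $N$: there $\sigma^{N/2}$ flips the endpoints of $\ell_0$, so the assignment of which local side corresponds to $\mathbb{T}_{k_N}$ requires an extra parity check to verify that $k_N$ is consistent with the no-crossing conclusion.
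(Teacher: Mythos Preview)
Your contradiction step has a genuine gap. You assert that if an arc of $R_N$ lay in $\mathbb{T}_{\overline{k_N}}$, then some boundary leaf of $\mathrm{Poly}(R_N)$ other than $\ell_0$ would have an endpoint in $\overline{\mathbb{T}_{\overline{k_N}}}$, and that Lemma~\ref{lmm:intersect}(i) rules this out. But Lemma~\ref{lmm:intersect}(i) only tells you that the endpoints of $\ell_n$ lie in $\overline{\mathbb{T}_{k_n}}$; it says nothing about whether $k_n=k_N$. The boundary leaves of $\mathrm{Poly}(R_N)$ other than $\ell_0$ are of the form $\ell_{N-m}$ for return times $m$, and there is no a~priori reason why $k_{N-m}$ must equal $k_N$. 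So the ``no such leaf exists'' claim is unjustified, and the contradiction does not close.

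The paper sidesteps this entirely by working one step earlier. Since $Q_{N-1}\subset\overline{\mathbb{T}_{k_{N-1}}}$ (by definition of $Q_n$ at a return time, and by Corollary~\ref{cor:dichotomy} with Proposition~\ref{prp:non-return} otherwise), and since $\ell_{N-1}$ is a boundary edge of $\mathrm{Poly}(Q_{N-1})$, the whole polygon $\mathrm{Poly}(Q_{N-1})$ sits on one side of $\ell_{N-1}$ inside an arc of length $<1/2$. Applying $\sigma$ is then injective on that arc, so $\mathrm{Poly}(R_N)$ lands entirely on one side of $\ell_0=\sigma(\ell_{N-1})$. Your orientation observation near $\ell_0$ is exactly the final step used to identify that side as $\mathbb{T}_{k_N}\cup\mathbb{T}_\star$; what you are missing is the length-$<1/2$ argument that places \emph{all} of $\mathrm{Poly}(R_N)$ on a single side of $\ell_0$ in the first place, rather than trying to exclude individual boundary leaves after the fact.
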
 

\begin{proof}
By Lemma \ref{lmm:intersect} (i), $\ell_n$ is an edge of $\mathrm{Poly}(Q_n)$. In particular, $\mathrm{Poly}(Q_{N-1})$ is contained in the closure of one of the connected components of $\overline{\mathbb{D}}\setminus \ell_{N-1}$. On the other hand, we see that $Q_{N-1} \subset \overline{\mathbb{T}_{k_{N-1}}}$ by the definition of $Q_n$ and the length of $\overline{\mathbb{T}_{k_{N-1}}}$ is strictly less than $1/2$. Since $\sigma(\ell_{N-1})=\ell_0$, it follows that $\mathrm{Poly}(R_N)=\mathrm{Poly}(\sigma(Q_{N-1}))$ is contained in the closure of one of the connected components of $\overline{\mathbb{D}}\setminus \ell_0$. Since $R_N\supset \Pi_0(H)$ by Lemma \ref{lmm:intersect} (ii), we conclude that $R_N\subset \mathbb{T}_{k_N}\cup \mathbb{T}_{\star}$.
\end{proof}

\begin{proof}[Proof of Theorem \ref{thm:main}]

Suppose that we take $\theta\in\Pi_1(H)$ and consider its orbit $\sigma^n(\theta)$. When $n$ is not a return time for any conspicuous components, its coding is $k_n$ by Corollary \ref{cor:dichotomy} and Proposition \ref{prp:non-return}. When $n$ is a return time for a conspicuous component $H'$, the set of points which do not belong to $\mathbb{T}_{K(H')}\cup\mathbb{T}_{\widehat{K}(H')\star}$ is $Q_n$ and we follow its further iteration. At the final time $n=N$, all points in the remaining set $R_N$ have coding either $k_N$ or $\star$ thanks to Proposition \ref{prp:last}. Therefore,  they belong to $\mathbb{T}_{K(H)}\cup\mathbb{T}_{\widehat{K}(H)\star}$.
\end{proof}

\appendix

\section{Examples}\label{sec:examples}

In order to understand the proof of Theorem \ref{thm:main}, in this Appendix we present several hyperbolic components together with their conspicuous components and show how the sets $R_i$ and $Q_i$ behave. In the following computations, we only write the numerators over a fixed denominator of rational angles. The symbol $e$ indicates an image of an endpoint of $\Pi_1(H)$ by $\sigma$. See also pages 454 and 455 of~\cite{D}. 

\medskip

\noindent
\textit{Example 1.} Consider the hyperbolic component $H_{\mathrm{Lob}}$ with $H_{\mathrm{Air}}\triangleright H_{\mathrm{Lob}}$, where   
\begin{enumerate}
\item $\Pi_1(H_{\mathrm{Lob}})=[13/31, 18/31]$, the $\star$-piece is the union of $[13/62, 18/62]$ and $[44/62, 49/62]$, the $A$-piece is $(49/62, 13/62)$, the $B$-piece is $(18/62, 44/62)$, $K(H_{\mathrm{Lob}})=BABBA$ and $\mathrm{per}(H_{\mathrm{Lob}})=5$, 
\item $\Pi_1(H_{\mathrm{Air}})=[3/7, 4/7]$, $K(H_{\mathrm{Air}})=BAA$ and $\mathrm{per}(H_{\mathrm{Air}})=3$.
\end{enumerate}
Below, we only write the numerators over the denominator $62$. Then,
\[\Pi_1(H_{\mathrm{Lob}})=[26^e, 36^e]\mapsto [52^e, 10^e]\mapsto [42^e, 20^e]\]
whose coding is either $BAA$, $BA\,\star$ or $BAB$. The $BAB$-part of the last image is mapped as
\[[18, 20^e]\cup[42^e, 44] \mapsto [36, 40^e]\cup[22^e, 26]\mapsto [10, 18^e]\cup[44^e, 52]\]
whose coding is either $BABBA$ or $BABB\,\star$. This is illustrated in Figure \ref{fig:ex1}.

\begin{figure}
\centering
\begin{tikzpicture}

\node(ex11) at (0,0){\includegraphics[width=1.5in]{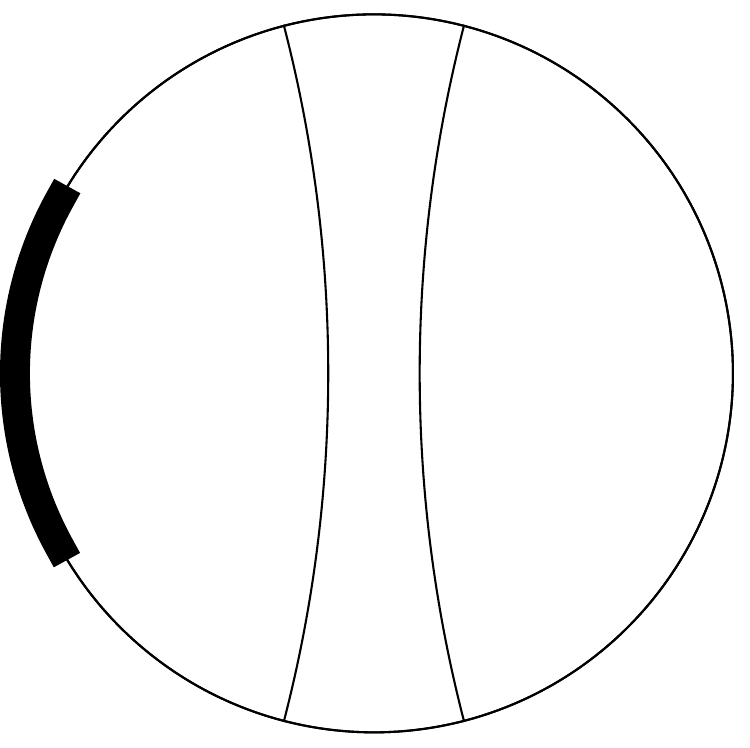}};
\node(26) at (-1.8,1.3) {26};
\node(36) at (-1.8,-1.3) {36};
\draw[->,black, thick] (2.0,0) -- (3,0);
\node[] at (2.5,0.3) {$\sigma$};
\node (ex12) at (5,0){\includegraphics[width=1.5in]{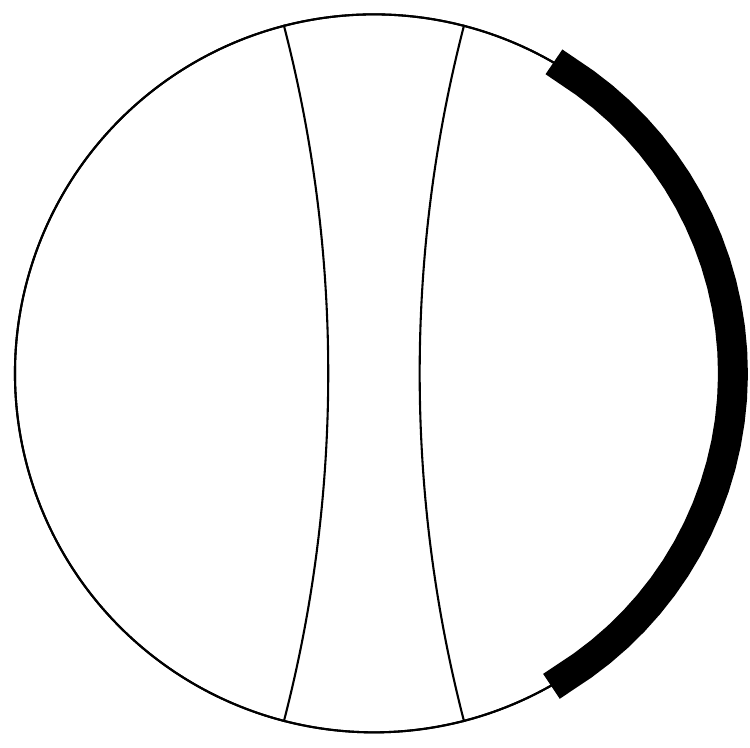}};
\node(10) at (6.2,1.8) {10};
\node(52) at (6.2,-1.8) {52};
\draw[->,black, thick] (7,0) -- (8,0);
\node[] at (7.5,0.3) {$\sigma$};
\node (ex13) at (10,0){\includegraphics[width=1.5in]{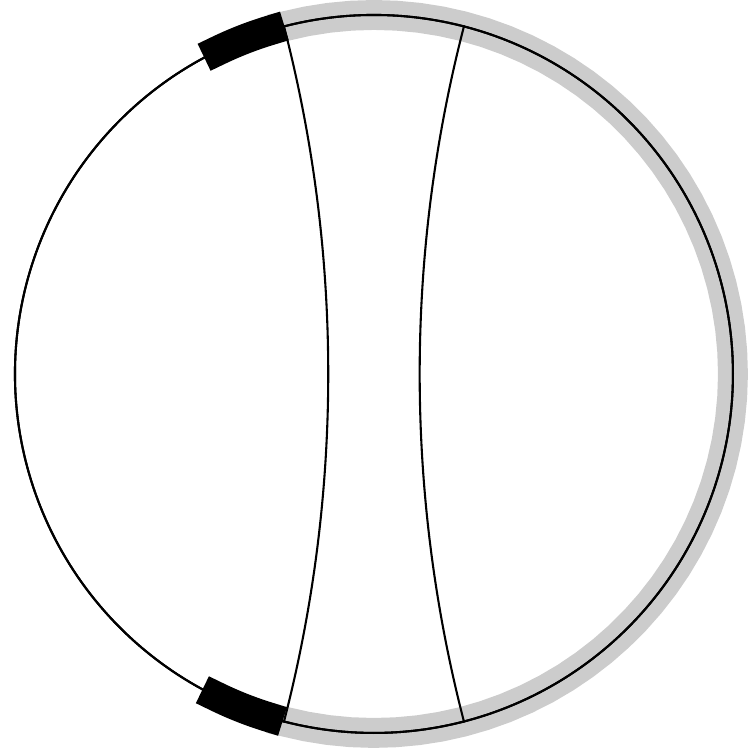}};
\node(18) at (9.5,2.1) {18};
\node(20) at (8.84,1.9) {20};
\node(42) at (8.84,-1.9) {42};
\node(44) at (9.5,-2.1) {44};
\draw[->,black,thick,rounded corners =5mm] (12,0) to (13,0) to (13,-3) to (-1,-3) to (-1,-6) to (0,-6);
\node[] at (5,-2.7) {$\sigma$};
\node(ex14) at (2.5,-6){\includegraphics[width=1.5in]{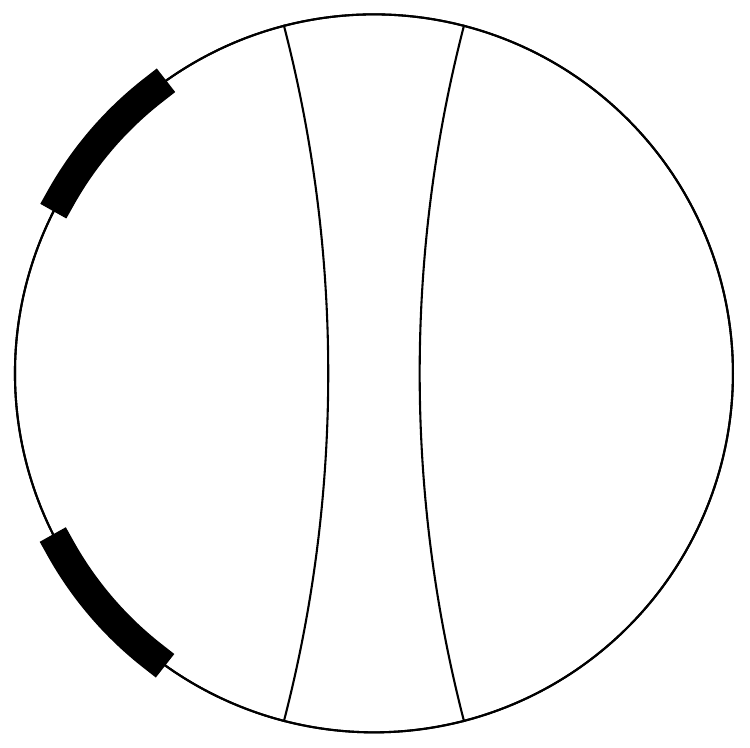}};
\node(22) at (1.4,-4.2) {22};
\node(26) at (.5,-5.2) {26};
\node(36) at (.5,-6.8) {36};
\node(40) at (1.4,-7.8) {40};

\draw[->,black, thick] (4.5,-6) -- (5.5,-6);
\node[] at (5,-5.7) {$\sigma$};
\node(ex15) at (7.5,-6){\includegraphics[width=1.5in]{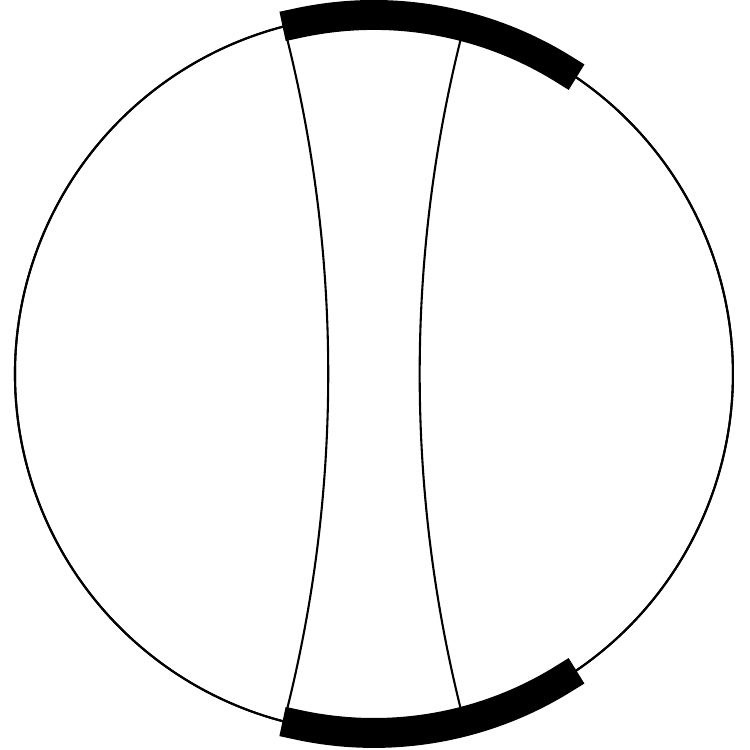}};
\node(18) at (6.9,-3.9) {18};
\node(44) at (6.9,-8.1) {44};

\node(1-) at (8.7,-4.1) {10};
\node(52) at (8.7,-7.9) {52};
    
\end{tikzpicture}

\caption{$Q_i$ (black) and $R_i$ (grey and black) for $1 \le i \le 5$ for $H_{\mathrm{Lob}}$. }
\label{fig:ex1}
\end{figure}

\medskip

\noindent
\textit{Example 2.} Consider the hyperbolic component $H_4$ with $H_{\mathrm{Air}}\triangleright H_4$, where 
\begin{enumerate}
\item $\Pi_1(H_4)=[2/5, 3/5]$, the $\star$-piece is the union $[2/10, 3/10]\cup [7/10, 8/10]$, the $A$-piece is $(8/10, 2/10)$, the $B$-piece is $(3/10, 7/10)$, $K(H_4)=BABB$ and $\mathrm{per}(H_4)=4$,
\item $\Pi_1(H_{\mathrm{Air}})=[3/7, 4/7]$, $K(H_{\mathrm{Air}})=BAA$ and $\mathrm{per}(H_{\mathrm{Air}})=3$. 
\end{enumerate}
Below, we only write the numerators over the denominator $10$. Then,
\[\mathrm{int}(\Pi_1(H_4))=(4^e, 6^e)\mapsto (8^e, 2^e)\mapsto (6^e, 4^e)\]
whose coding is either $BAA$, $BA\,\star$ or $BAB$. The $BAB$-part of the last image is mapped as
\[(3, 4^e)\cup(6^e, 7)\mapsto (6, 8^e)\cup (2^e, 4)\]
whose coding is either $BABB$ or $BAB\,\star$. This is illustrated in Figure \ref{fig:ex2}.

\begin{rmk}\label{rmk:flip}
Although the period of $H_4$ is $4$, the major leaf $\ell_0=\ell_{\{\frac{1}{5}, \frac{4}{5}\}}$ is invariant but flipped by $\sigma^2$ (see Figure \ref{fig:ex2}). Note also that we have $I_{H_4}(2/5)=I_{H_4}(3/5)=B\star B\star \,\cdots$, which does not belong to $\bigcup_{H'\triangleright H_4}(\mathbb{T}_{K(H')}\cup \mathbb{T}_{\widehat{K}(H')\, \star})$ in Theorem \ref{thm:main}. This is a reason why we need to delete $\Xi(H_4)$ from $\Pi_1(H_4)$ in Theorem \ref{thm:main}.
\end{rmk}

\begin{figure}

\centering
\begin{tikzpicture}

\node (ex21) at (5,0) {\includegraphics[width=1.5in]{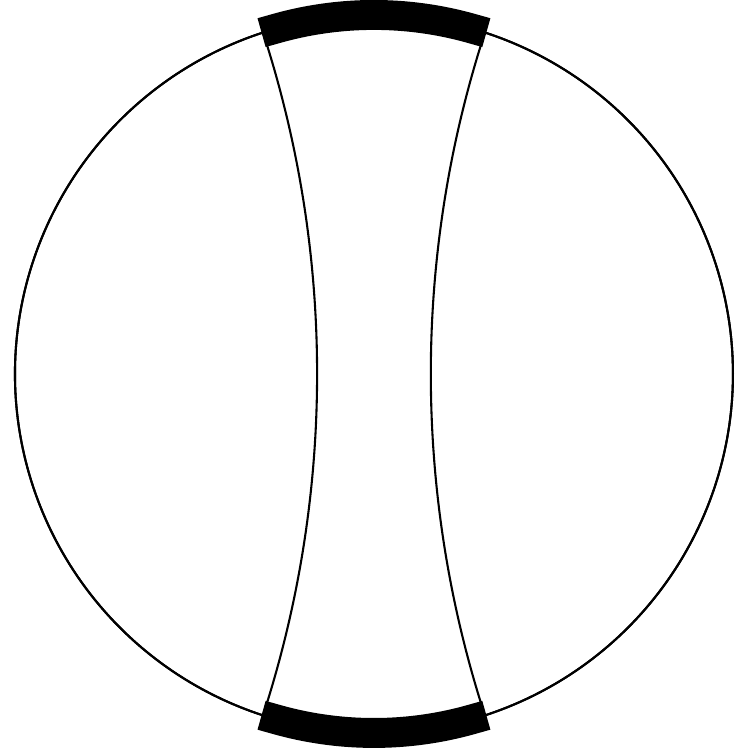}};
\draw[->,black, thick] (7,0) -- (8,0);
\node[] at (7.5,0.3) {$\sigma^2$};
\node(3) at (4.4,2.1) {3};
\node(7) at (4.4,-2.1) {7};
\node(2) at (5.6,2.1) {2};
\node(8) at (5.6,-2.1) {8};
\node (ex22) at (10,-0.1){\includegraphics[width=1.5in]{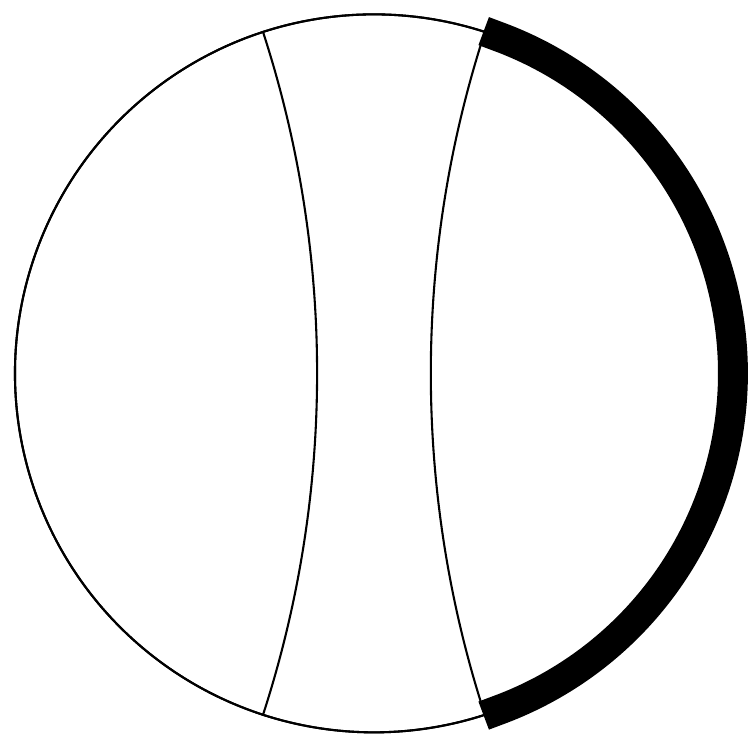}};
\node(2) at (10.6,2.1) {2};
\node(8) at (10.6,-2.1) {8};
\end{tikzpicture}
\caption{$Q_i$ and $R_i$ (both black) for $i=0$ and $i=2$ for $H_4$. The leaf on the right is $\ell_0=\ell_{\{\frac{1}{5}, \frac{4}{5}\}}$ and the leaf on the left is $\ell'_0=\ell_{\{\frac{3}{10}, \frac{7}{10}\}}$. The $\Pi_0(H_4)$-side of the point $2/10$ is mapped by $\sigma^2$ to the non-$\Pi_0(H_4)$-side of the point $8/10$.}
\label{fig:ex2}
\end{figure}

\medskip

\noindent
\textit{Example 3.} Consider the hyperbolic component $H_6$ with $H_{\mathrm{Air}}\triangleright H_5\triangleright H_6$, where  
\begin{enumerate}
\item $\Pi_1(H_6)=[26/63, 37/63]$\footnote{In page 454 of~\cite{D}, the value $26/63$ was incorrectly computed as $23/63$.}, the $\star$-piece is the union $[26/126, 37/126]\cup [89/126, 100/126]$, the $A$-piece is $(100/126, 26/126)$, the $B$-piece is $(37/126, 89/126)$, $K(H_6)=BABBBB$ and $\mathrm{per}(H_6)=6$, 
\item $\Pi_1(H_{\mathrm{Air}})=[3/7, 4/7]$, $K(H_{\mathrm{Air}})=BAA$ and $\mathrm{per}(H_{\mathrm{Air}})=3$, 
\item $\Pi_1(H_5)=[13/31, 18/31]$, $K(H_5)=BABBA$ and $\mathrm{per}(H_5)=5$.
\end{enumerate}
Below, we only write the numerators over the denominator $126$. Then,
\[\mathrm{int}(\Pi_1(H_6))=(52^e, 74^e)\mapsto (104^e, 22^e)\mapsto (82^e, 44^e)\]
whose coding is either $BAA$, $BA\,\star$ or $BAB$. The $BAB$-part of the last image is mapped as
\[(37, 44^e)\cup(82^e, 89)\mapsto (74, 88^e)\cup(38^e,52)\mapsto (22, 50^e)\cup(76^e, 104)\]
whose coding is either $BABBA$, $BABB\,\star$ or $BABBB$. The $BABBB$-part of the last image is mapped as
\[(37, 50^e)\cup(76^e, 89)\mapsto (74, 100^e)\cup (26^e, 52)\]
whose coding is either $BABBBB$ or $BABBB\,\star$. This is illustrated in Figure \ref{fig:ex3}.

\begin{figure}
\centering

\begin{tikzpicture}

\node(ex31) at (0,0){\includegraphics[width=1.5in]{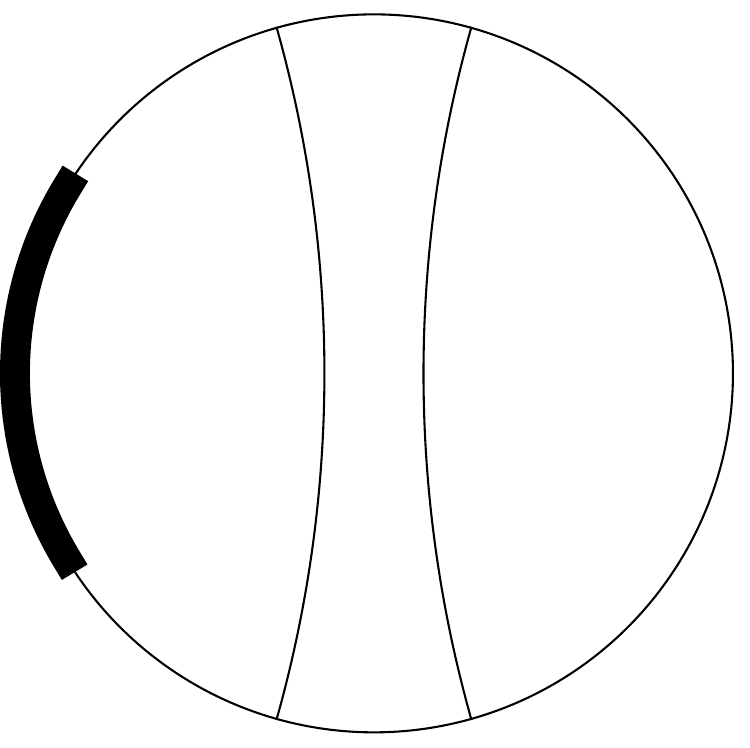}};
\draw[->,black, thick] (2,0) -- (3,0);
\node[] at (2.5,0.2) {$\sigma$};
\node (ex32) at (5,0){\includegraphics[width=1.5in]{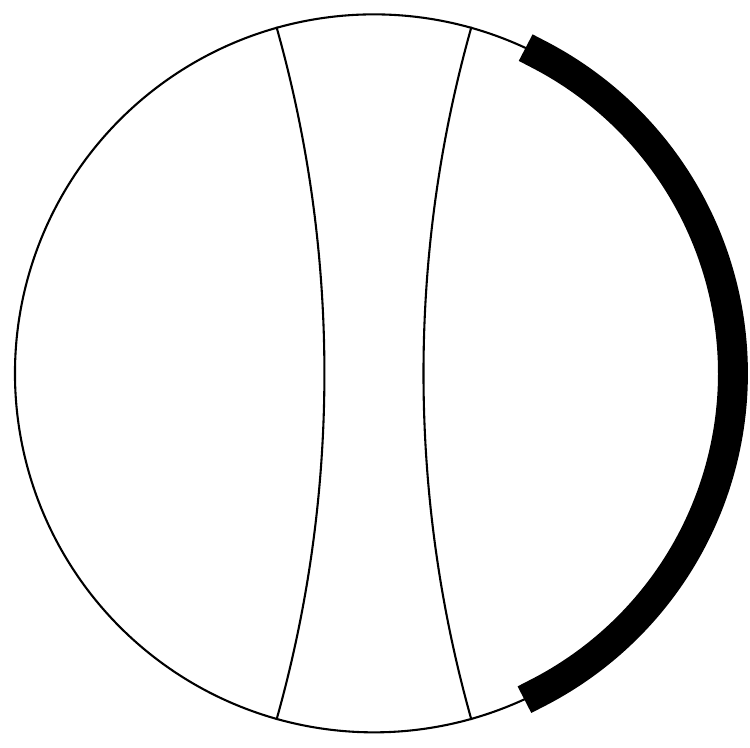}};
\draw[->,black, thick] (7,0) -- (8,0);
\node[] at (7.5,0.2) {$\sigma$};
\node (ex33) at (10,0){\includegraphics[width=1.5in]{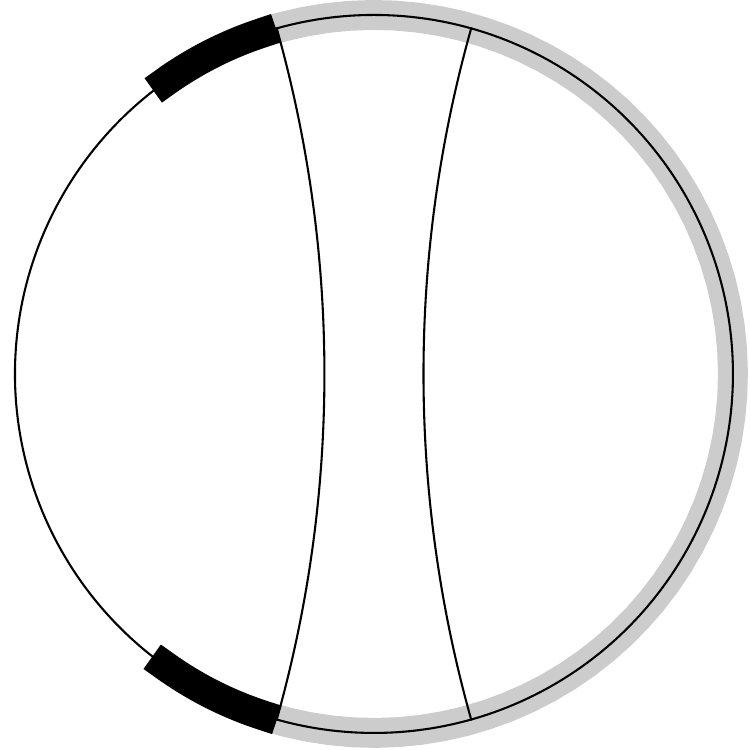}};

\draw[->,black,thick,rounded corners =5mm] (12,0) to (13,0) to (13,-3) to (-3,-3) to (-3,-6) to (-2,-6);
\node[] at (5,-2.7) {$\sigma$};

\node(ex34) at (0,-6){\includegraphics[width=1.5in]{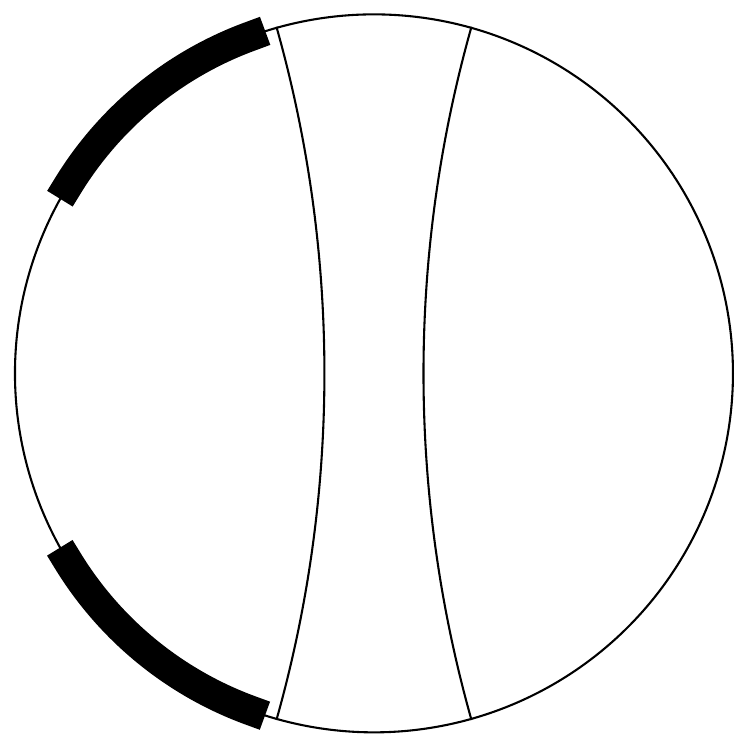}};
\draw[->,black, thick] (2,-6) -- (3,-6);
\node[] at (2.5,-5.7) {$\sigma$};
\node(ex35) at (5,-6){\includegraphics[width=1.5in]{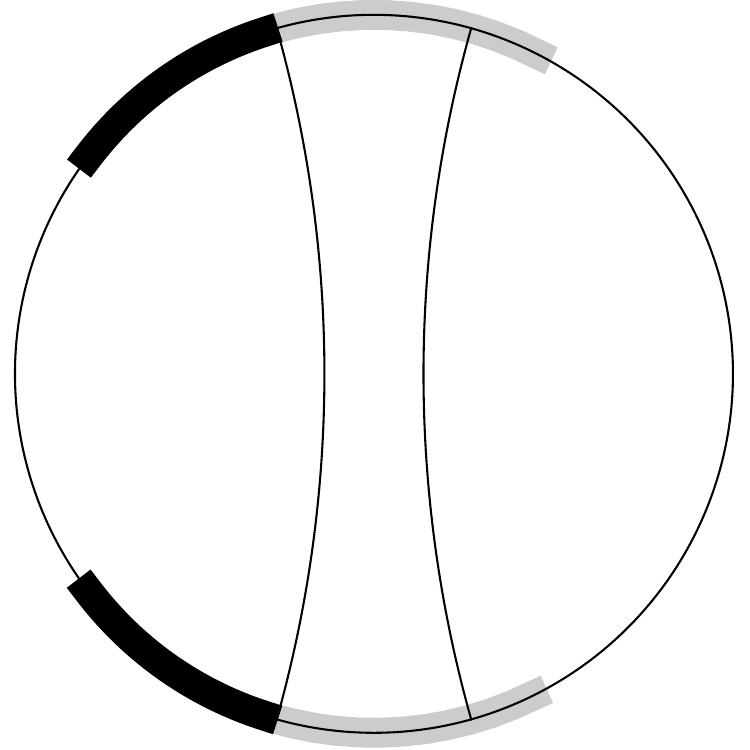}};
\draw[->,black, thick] (7,-6) -- (8,-6);
\node[] at (7.5,-5.7) {$\sigma$};
\node(ex35) at (10,-6){\includegraphics[width=1.5in]{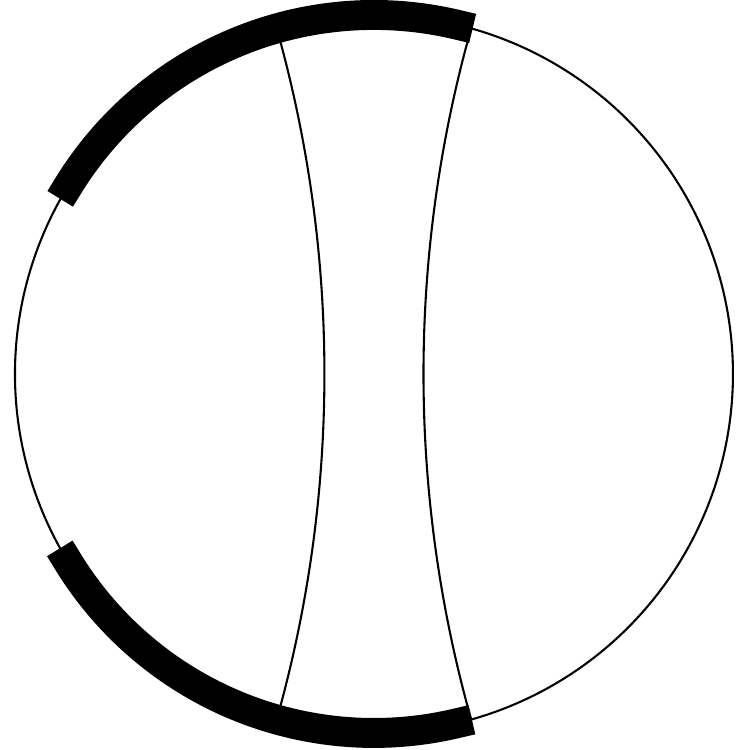}};
\node(52) at (-1.8,1.3) {52};
\node(74) at (-1.8,-1.3) {74};
\node(22) at (5.8,2) {22};
\node(104) at (5.8,-2) {104};
\node(37) at (9.5,2.1) {37};
\node(89) at (9.5,-2.1) {89};
\node(44) at (8.7,1.8) {44};
\node(82) at (8.7,-1.8) {82};
\node(52) at (-1.9,-5.) {52};
\node(74) at (-1.9,-7.) {74};
\node(38) at (-0.6,-3.9) {38};
\node(88) at (-0.6,-8.04) {88};

\node(37) at (4.4,-3.9) {37};
\node(89) at (4.4,-8.04) {89};
\node(50) at (3.14,-4.8) {50};
\node(76) at (3.14,-7.2) {76};
\node(22) at (6,-4) {22};
\node(108) at (6,-8.0) {104};

\node(26) at (10.6,-3.8) {26};
\node(44) at (8.1,-4.9) {52};
\node(82) at (8.1,-7.1) {74};
\node(100) at (10.6,-8.1) {100};

\end{tikzpicture}

\caption{$Q_i$ (black) and $R_i$ (grey and black) for $1 \le i \le 6$ for $H_6$.}
\label{fig:ex3}
\end{figure}

\medskip

\noindent
\textit{Example 4.} Consider the hyperbolic component $H_6'$ with $H'_4\triangleright H_6'$ and $H'_5\triangleright H_6'$, where 
\begin{enumerate}
\item $\Pi_1(H_6')=[10/63, 17/63]$, the $\star$-piece is the union $[10/126, 17/126]\cup [73/126, 80/126]$, the $A$-piece is $(80/126, 10/126)$, the $B$-piece is $(17/126, 73/126)$, $K(H_6')=BBABBB$ and $\mathrm{per}(H)=6$, 
\item $\Pi_1(H'_4)=[3/15, 4/15]$, $K(H'_4)=BBAA$ and $\mathrm{per}(H'_4)=4$,
\item $\Pi_1(H'_5)=[5/31, 6/31]$, $K(H'_5)=BBABA$ and $\mathrm{per}(H'_5)=5$.
\end{enumerate}
Below, we only write the numerators over the denominator $126$. Then,
\[\mathrm{int}(\Pi_1(H_6'))=(20^e, 34^e)\mapsto (40^e, 68^e)\mapsto (80^e, 10^e)\mapsto (34^e, 20^e)\]
whose coding is either $BBAA$, $BBA\,\star$ or $BBAB$. The $BBAB$-part of the last image is mapped as
\[(34^e, 73)\cup(17, 20^e)\mapsto (68^e, 20)\cup(34, 40^e)\]
whose coding is either $BBABA$, $BBAB\,\star$ or $BBABB$. The $BBABB$-part of the last image is mapped as
\[(68^e, 73)\cup(17, 20)\cup(34, 40^e)\mapsto (10^e, 20)\cup(34, 40)\cup(68, 80^e)\]
whose coding is either $BBABBB$ or $BBABB\,\star$. This is illustrated in Figure \ref{fig:ex4}.

\begin{figure}
\centering
\begin{tikzpicture}

\node(ex41) at (0,0){\includegraphics[width=1.5in]{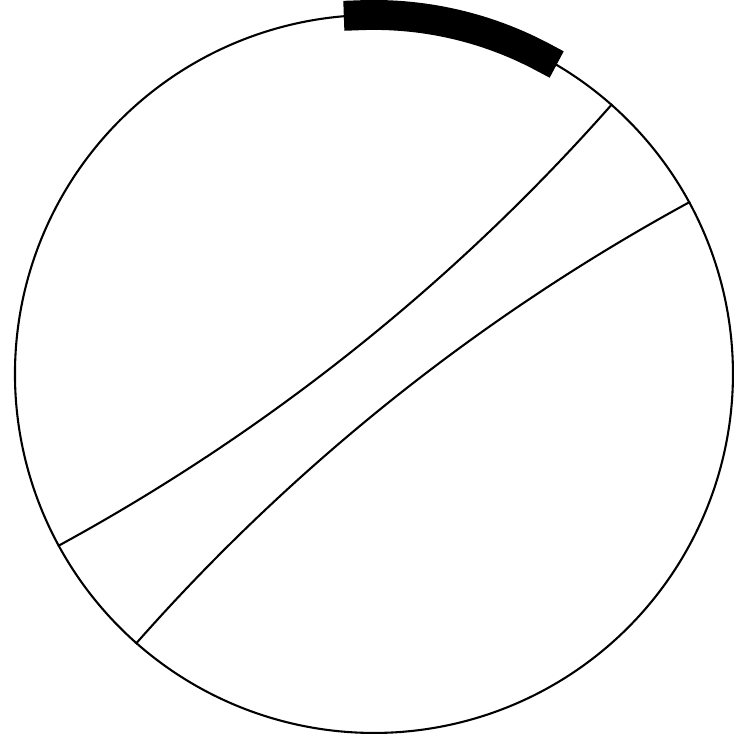}};
\draw[->,black, thick] (2,0) -- (3,0);
\node[] at (2.5,0.2) {$\sigma$};
\node (ex42) at (5,0){\includegraphics[width=1.5in]{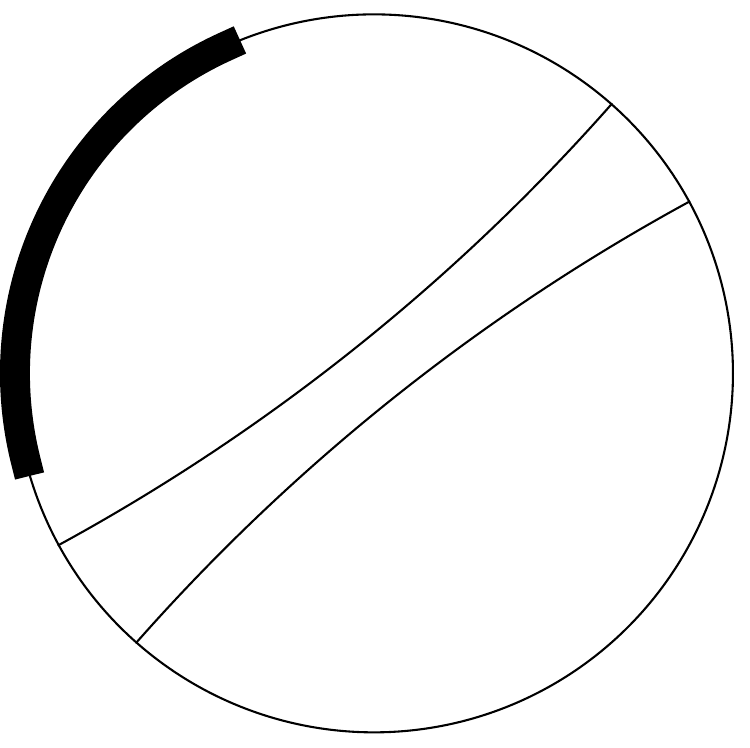}};
\draw[->,black, thick] (7,0) -- (8,0);
\node[] at (7.5,0.2) {$\sigma$};
\node (ex43) at (10,0){\includegraphics[width=1.5in]{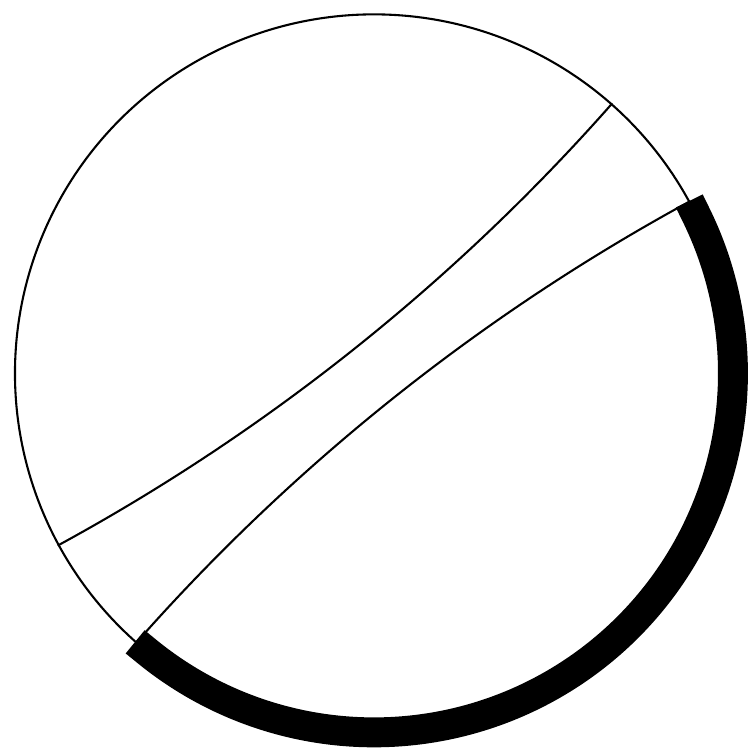}};

\draw[->,black,thick,rounded corners =5mm] (12,0) to (13,0) to (13,-3) to (-3,-3) to (-3,-6) to (-2,-6);
\node[] at (5,-2.7) {$\sigma$};

\node(ex44) at (0,-6){\includegraphics[width=1.5in]{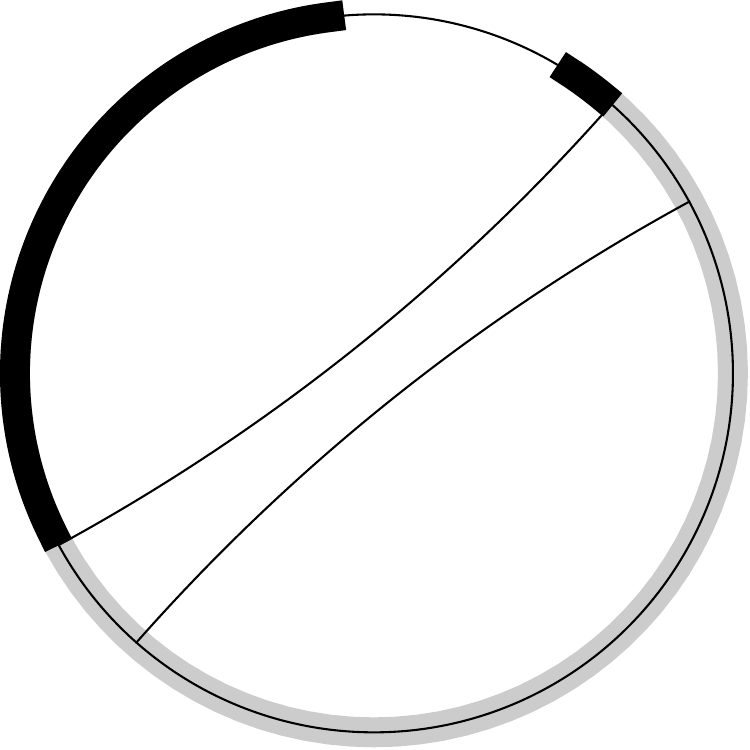}};
\draw[->,black, thick] (2,-6) -- (3,-6);
\node[] at (2.5,-5.7) {$\sigma$};
\node(ex45) at (5,-6){\includegraphics[width=1.5in]{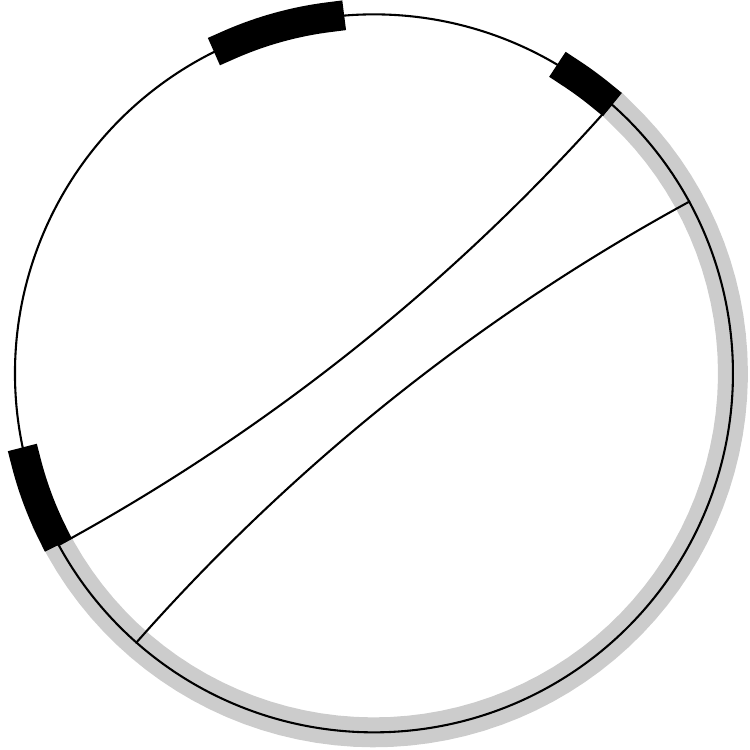}};
\draw[->,black, thick] (7,-6) -- (8,-6);
\node[] at (7.5,-5.7) {$\sigma$};
\node(ex45) at (10,-6){\includegraphics[width=1.5in]{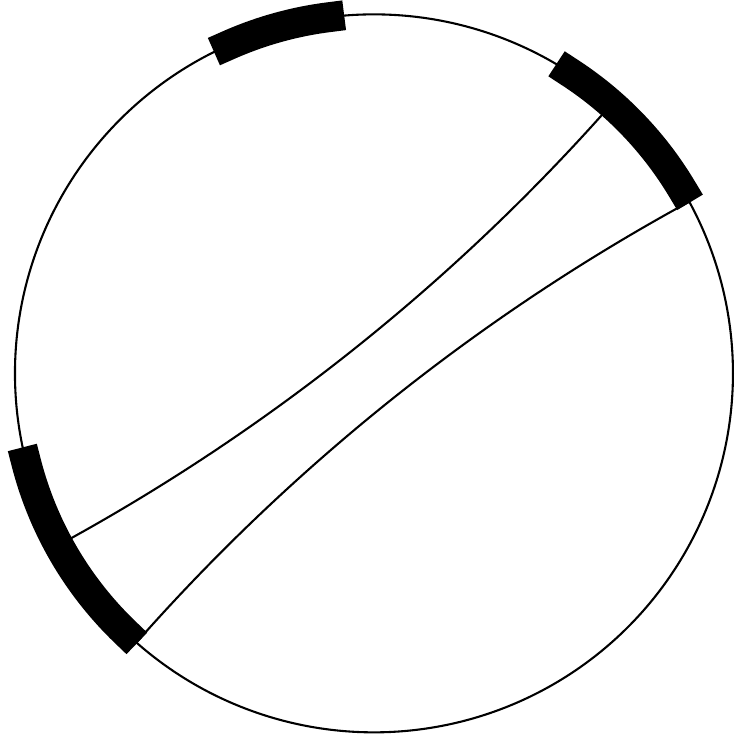}};

\node(20) at (1.3,1.8) {20};
\node(34) at (-0.3,2.2) {34};
\node(40) at (4.3,2.) {40};
\node(68) at (2.9,-0.7) {68};
\node(10) at (12,1.1) {10};
\node(80) at (8.5,-1.7) {80};
\node(20) at (1.,-4.1) {20};
\node(17) at (1.5,-4.5) {17};
\node(34) at (-0.3,-3.9) {34};
\node(73) at (-2.,-7) {73};
\node(68) at (2.9,-6.4) {68};
\node(73) at (3.,-7) {73};
\node(40) at (4.,-4.1) {40};
\node(34) at (4.8,-3.9) {34};
\node(20) at (6,-4.1) {20};
\node(17) at (6.5,-4.5) {17};

\node(10) at (11.9,-5.) {10};
\node(20) at (11,-4.1) {20};
\node(40) at (9.,-4.1) {40};
\node(34) at (9.8,-3.9) {34};
\node(68) at (7.9,-6.4) {68};
\node(80) at (8.5,-7.6) {80};
\end{tikzpicture}

\caption{$Q_i$ (black) and $R_i$ (grey and black)  for $1 \le i \le 6$ for $H_6'$.}
\label{fig:ex4}
\end{figure}

\section{The monodromy action conjecture}\label{sec:Henon}

In this Appendix we introduce the monodromy action conjecture of Lipa~\cite{Li} for the complex H\'enon family $f_{c, b}$ and explain how Corollary \ref{cor:main} gives rise to a solution to the degenerate case $b=0$ of the conjecture.

Consider the \textit{complex H\'enon family}:
\[f_{c,b} : (x, y)\longmapsto (x^2+c-by, x)\]
defined on $\mathbb{C}^2$, where $(c,b)\in\mathbb{C}^2$ is a parameter. Note that when $b=0$, the dynamics of $f_{c, 0}$ reduces to the quadratic map $p_c(z)=z^2+c$. 

\begin{figure}
	\includegraphics[scale=0.222]{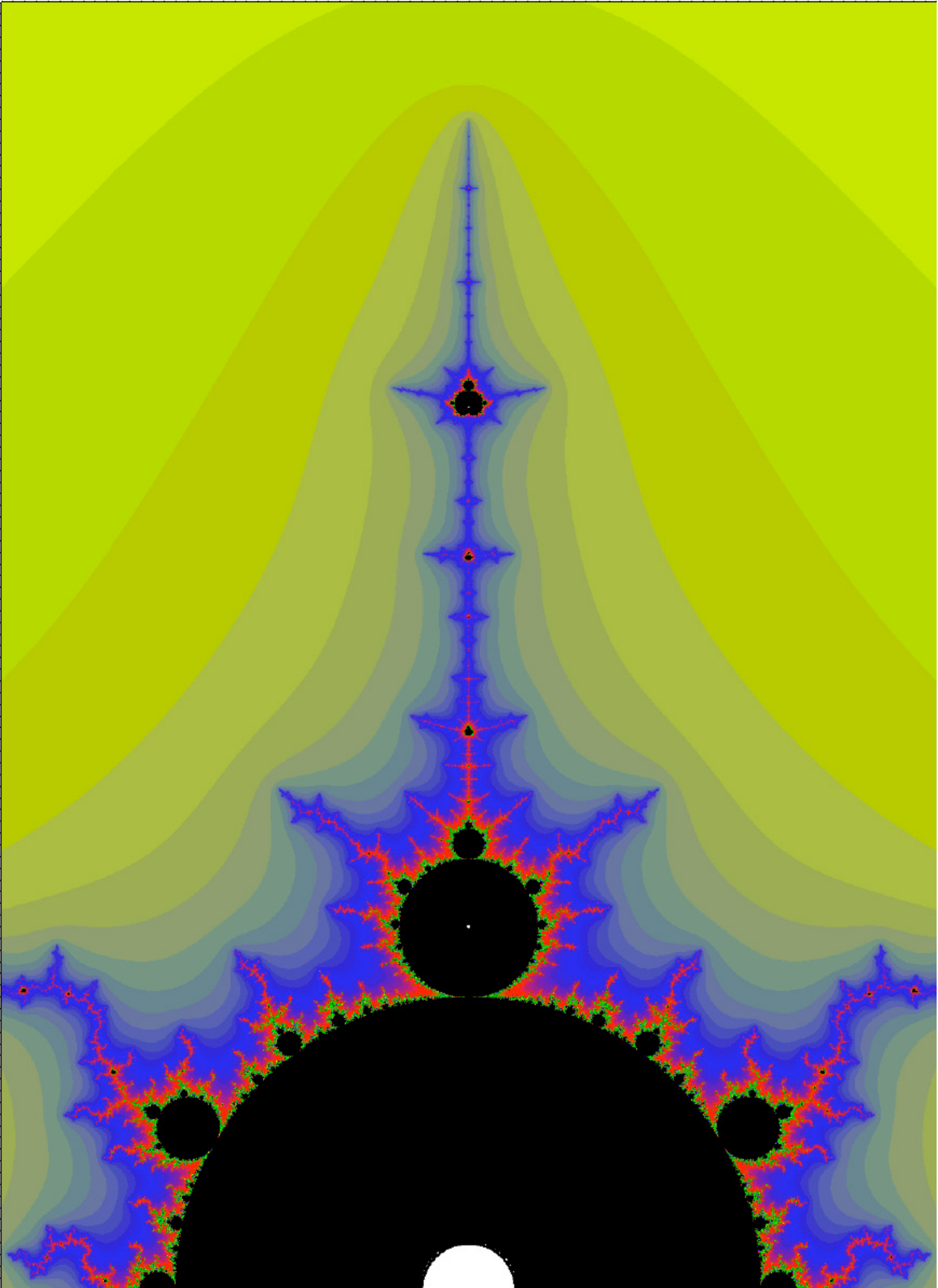}
	\includegraphics[scale=0.215]{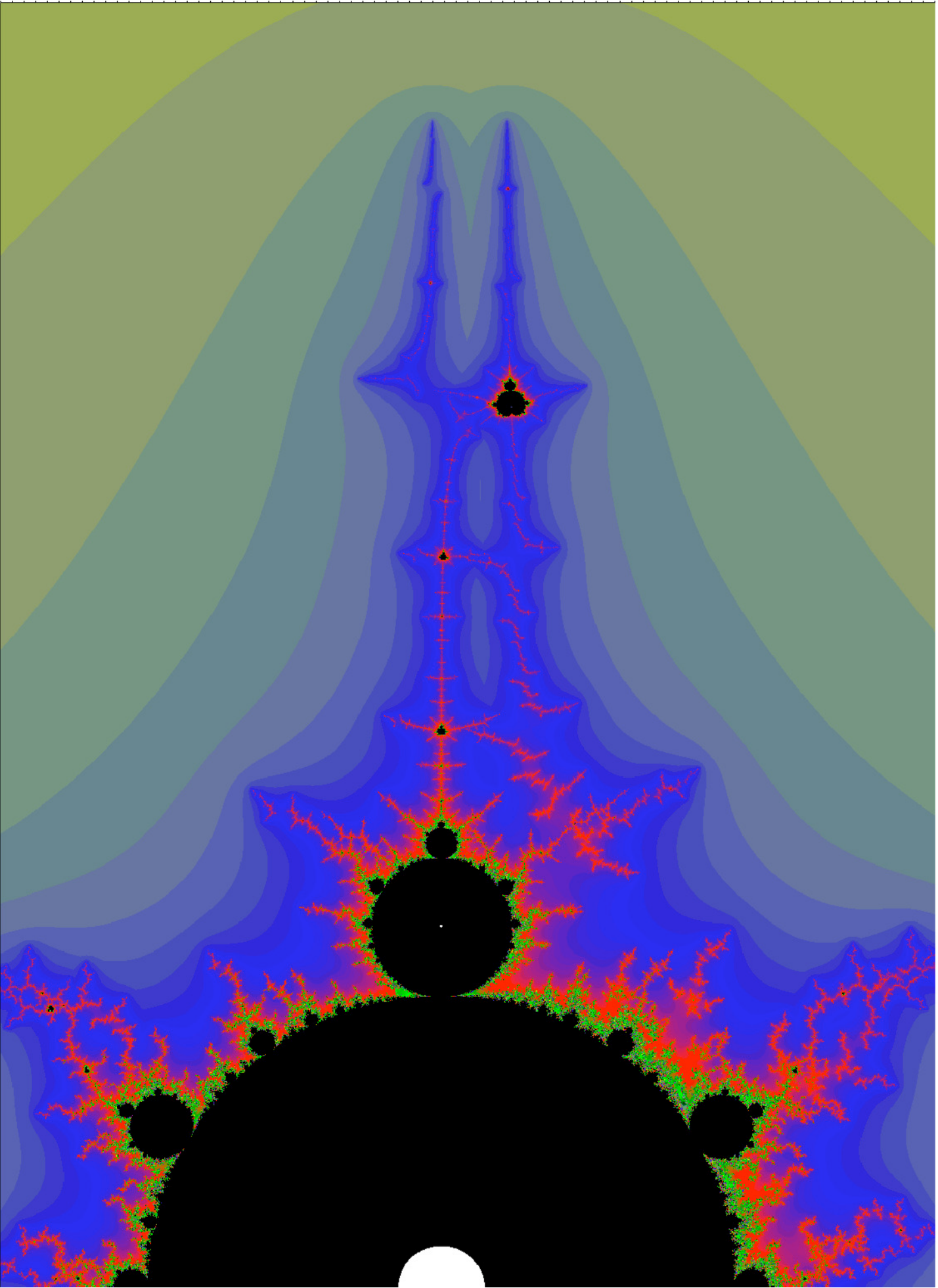}
	\includegraphics[scale=0.215]{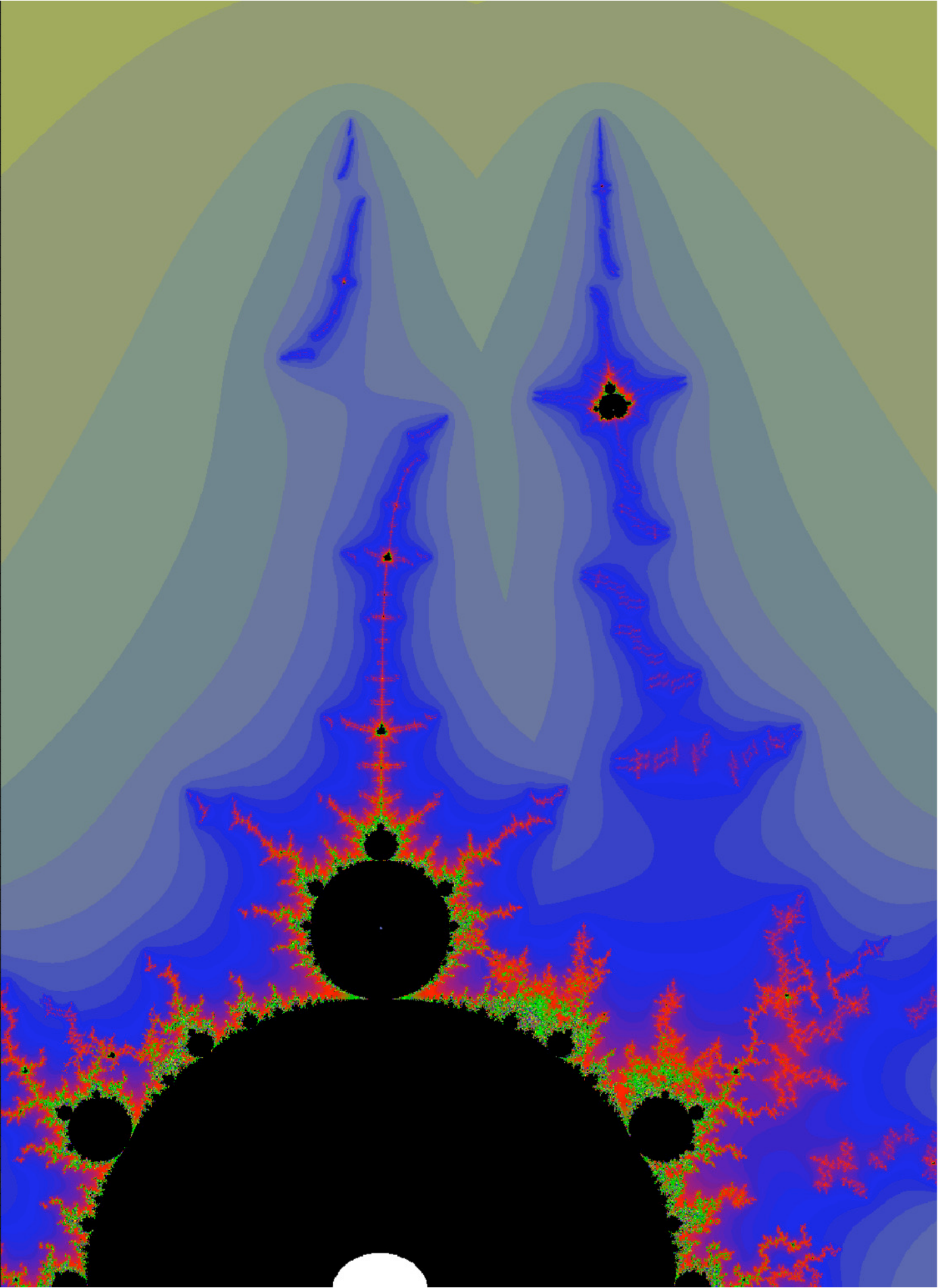}
	\caption{The $c$-planes with $b=0$ (left), $b=0.015i$ (middle) and $b=0.05i$ (right)~\cite{Li}.}
	\label{fig:splitting}
\end{figure}

We say that a complex H\'enon map is a \emph{hyperbolic horseshoe on} $\mathbb{C}^2$ if its Julia set $J_{c,b}$ is a hyperbolic set and $f_{c,b} : J_{c,b}\to J_{c,b}$ is topologically conjugate to the shift map $\sigma : \{A, B\}^{\mathbb{Z}}\to\{A, B\}^{\mathbb{Z}}$. Note that when $b=0$, we say that $f_{c, 0}$ is a hyperbolic horseshoe on $\mathbb{C}^2$ if its Julia set $J_c$ is a hyperbolic set and $p_c : J_c\to J_c$ is topologically conjugate to the shift map $\sigma : \{A, B\}^{\mathbb{N}}\to\{A, B\}^{\mathbb{N}}$. The \emph{complex hyperbolic horseshoe} locus is defined as
\[\mathcal{H}_{\mathbb{C}}\equiv\big\{(c,b)\in \mathbb{C}\times\mathbb{C}^{\times} : f_{c,b} \mbox{ is a hyperbolic horseshoe on } \mathbb{C}^2 \bigr\}.\]
%
Since we do not know if $\mathcal{H}_{\mathbb{C}}$ is connected, we define the \emph{shift locus} $\mathcal{S}_{\mathbb{C}}$ for the complex H\'enon family as the connected component of $\mathcal{H}_{\mathbb{C}}$ containing $(\mathbb{C}\setminus\mathfrak{M})\times \{0\}$.
In this region, Lipa aimed to understand the monodromy homomorphism:
\[\rho : \pi_1(\mathcal{S}_{\mathbb{C}}, \ast) \longrightarrow \mathrm{Aut}(\sigma, \{A, B\}^{\mathbb{Z}})\]
associated to a specific class of loops. His work concerns loops wrapping around the empirically observed phenomenon of ``herds.''

The software \emph{SaddleDrop} produced by Papadantonakis and Hubbard allows a user to create approximate pictures of 2D slices of the H\'enon parameter space~\cite{K}. Using this, Koch  observed (personal communication, 2012) that, as we perturb our Jacobian parameter $b$ away from zero, the Mandelbrot set  in the slice $\{(c,0) : c\in\mathbb{C}\}$ breaks up and renormalized Mandelbrot sets break off, moving in a direction related to the direction of parameter perturbation. These renormalized Mandelbrot sets move together in \emph{herds}. These have some coding based on levels of splitting, there is some hierarchy that is observed.  

Consider, first, the slice $\{(c,0) : c\in\mathbb{C}\}$ in parameter space (see the left picture in Figure~\ref{fig:splitting}) we see the Mandelbrot set $\mathfrak{M}$ in the $c$-plane. If we change $b$ slightly, we still have a Mandelbrot-like set in the corresponding $c$-plane. As we perturb $b$ more, the components beyond the root point of the Airplane hyperbolic component, $H_{\mathrm{Air}}$, seems to split into two different pieces (see the middle picture in Figure~\ref{fig:splitting}.) One, called the \emph{$A$-herd} of $H_{\mathrm{Air}}$, containing all the hyperbolic components beyond $H_{\mathrm{Air}}$ whose kneading sequences end in $A$, moves in the $c$-plane in the direction in which $b$ is perturbed. The other, called the \emph{$B$-herd} of $H_{\mathrm{Air}}$, containing all the hyperbolic components beyond $H_{\mathrm{Air}}$ whose kneading sequence end in $B$, moves in the opposite direction (see the right picture in Figure~\ref{fig:splitting}.) This speculative structure is discussed further in Section 8.1 of~\cite{Li}.

\begin{figure}
	\includegraphics[scale=0.296]{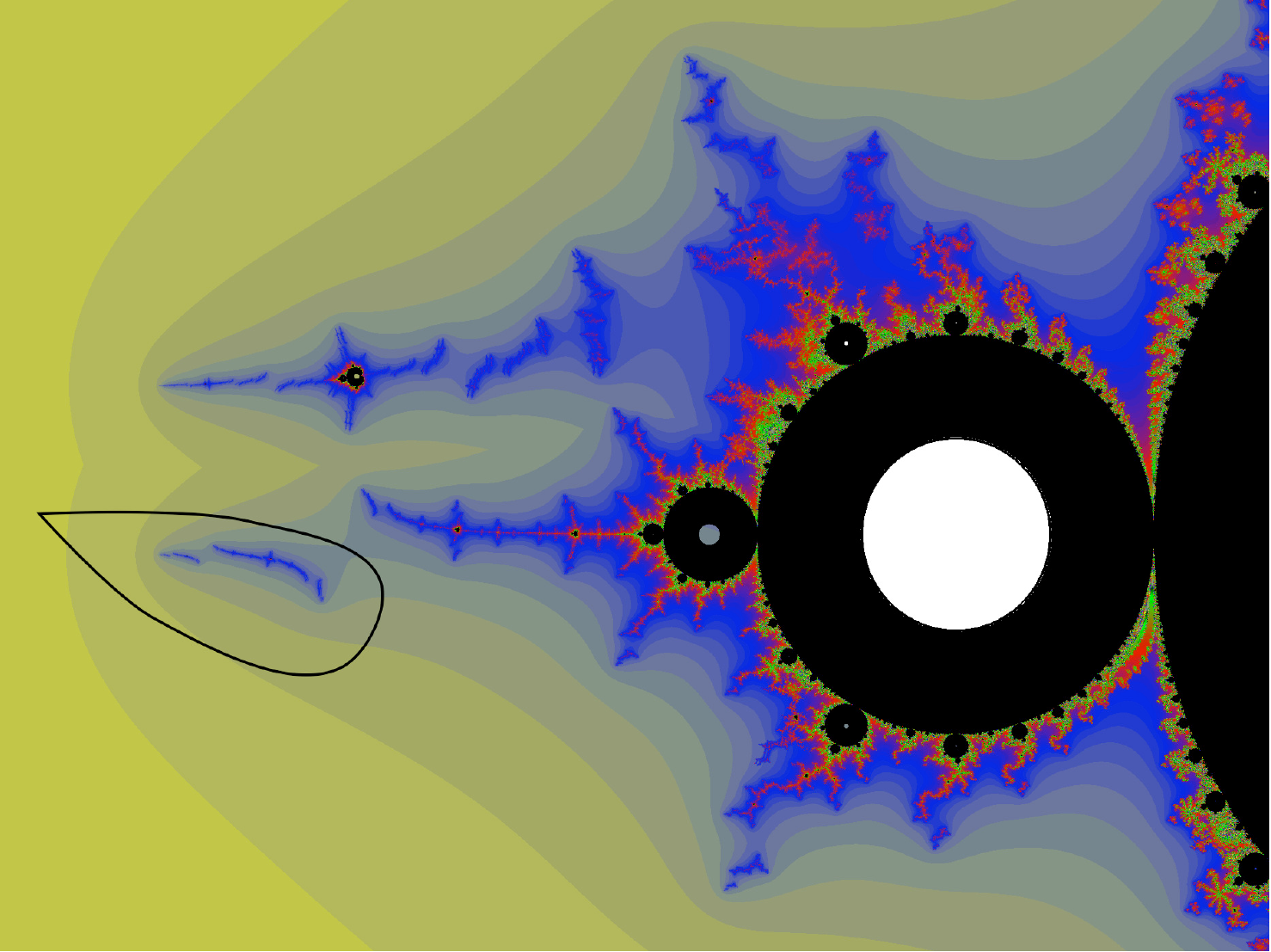}
	\includegraphics[scale=0.315]{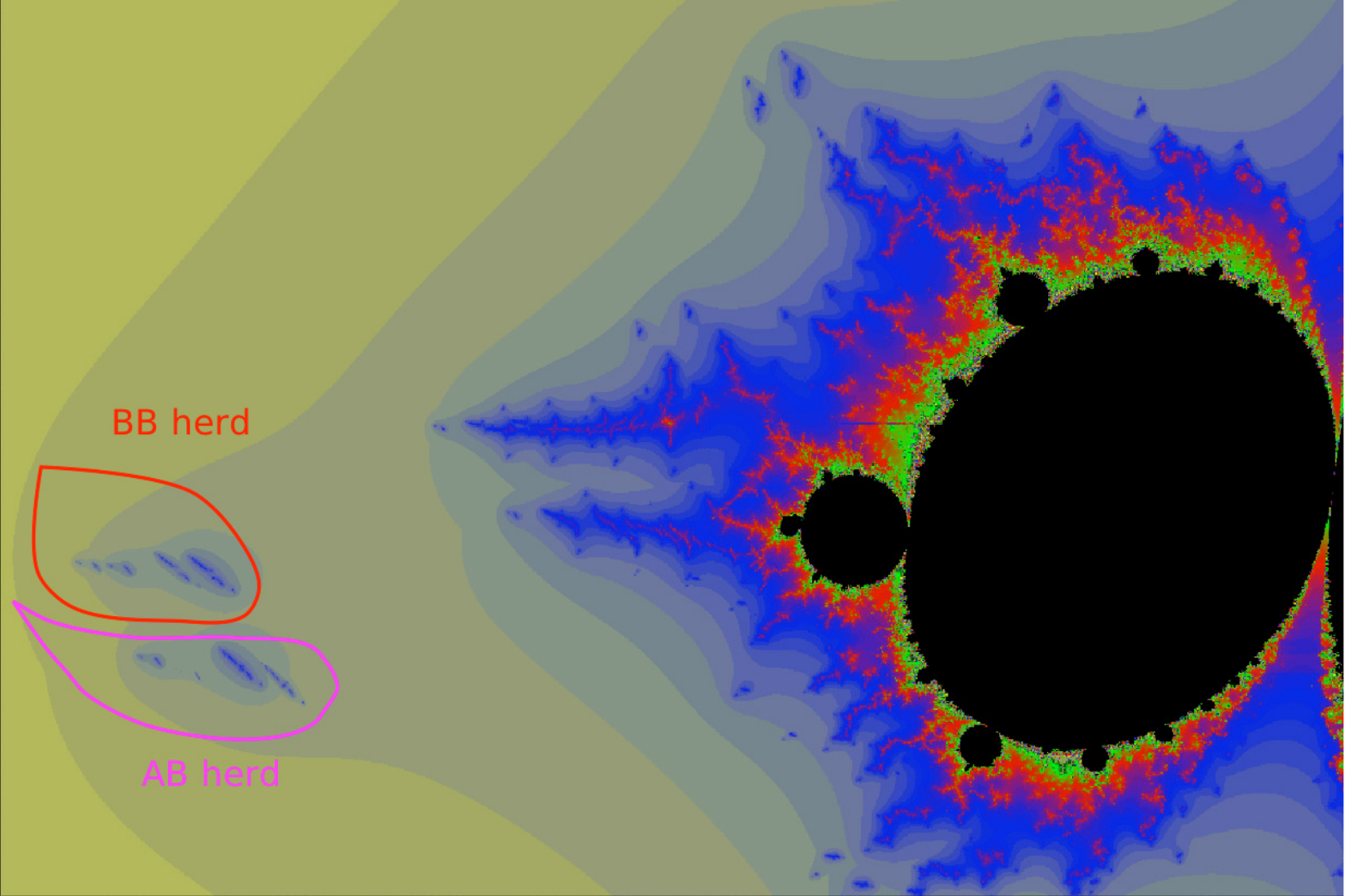}
	\caption{A loop surrounding the $B$-herd of Airplane wake with $b=0.05i$ (left) which splits to the $BB$-herd and the $AB$-herd with $b=0.2+0.3i$ (right)~\cite{Li}.}
	\label{fig:herds}
\end{figure}

This coding of herds forms part of the monodromy action conjectured automorphism associated with some loop in parameter space. The other part of the coding comes from the kneading sequence of the wake from which a herd originates. In Figure~\ref{fig:herds}, we see examples of loops around herds. Lipa conjectures that the marker describing the monodromy action of these loops is given by the coding of the herd followed by a $\ast$ entry, followed by the kneading sequence of the one-dimensional hyperbolic component. In the examples in Figure~\ref{fig:herds}, this would mean the markers are $B\ast BAA$, $BB \ast BAA$, and $AB\ast BAA$. More generally, he conjectures the following:

\begin{cjt}
Let $H$ be a hyperbolic component of $\mathfrak{M}$ and let $\underline{v}$ be a word over $\{A, B\}$. Suppose that $\gamma\in\pi_1(\mathcal{H}, \ast)$ winds around the $\underline{v}$-herd of the wake $\mathcal{W}_H$. Let $H_1, \dots, H_L$ be the hyperbolic components conspicuous to $H$ and put $\underline{w}^i\equiv \underline{v}\ast K(H_i)$. Then, the monodromy action of $\rho(\gamma)$ is described by the compositions of the markers $W\equiv\{\underline{w}^1, \dots, \underline{w}^L\}$.
\label{cjt:lipa}
\end{cjt}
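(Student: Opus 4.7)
The plan is to first formalize the notion of herds, which as noted in the introduction is one of the \emph{ad hoc} ingredients in Lipa's formulation. A natural definition would declare the $\underline{v}$-herd of $\mathcal{W}_H$ to be the connected component, under perturbation of $b$ away from $0$, of the collection of renormalized Mandelbrot-like sets arising in $\mathcal{W}_H$ whose symbolic address records the splitting pattern encoded by $\underline{v}$. Establishing that such herds exist and move holomorphically as $b$ varies over a slit disc would likely require a polynomial-like-mapping construction of Douady--Hubbard type adapted to the H\'enon family, together with a persistence theorem for the combinatorial data of splittings along Koch's observed hierarchy.

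Second, I would relate a loop $\gamma$ winding around the $\underline{v}$-herd to the pseudo-loops in the $b=0$ slice treated in Section~\ref{sec:monodromy}. The strategy is to contract the parameter $b$ to $0$ along a null-homotopy that, in the limit, collapses the $\underline{v}$-herd onto the root $r_H$ together with the combinatorial record $\underline{v}$. If this limit is controlled with respect to an appropriate topology on holomorphic motions of the horseshoe, then $\rho(\gamma)$ should factor through a composition of pseudo-monodromy actions, one for each conspicuous component $H_i$, acting on the coding associated with the marker $\underline{w}^i=\underline{v}\ast K(H_i)$. This is where Corollary~\ref{cor:main} would provide the combinatorial input: the pseudo-monodromy action on the one-sided shift $\{A,B\}^{\mathbb{N}}$ obtained from the $b=0$ limit is determined by the finite list of kneading sequences of conspicuous components, and the markers predicted by Lipa match precisely those produced by the corollary.

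Third, I would extend the one-sided action to the two-sided shift $\{A,B\}^{\mathbb{Z}}$ using the fact that $J_{c,b}$ for $(c,b)\in\mathcal{S}_{\mathbb{C}}$ carries a natural Markov partition with respect to which $f_{c,b}$ is conjugate to the full two-sided $2$-shift, and that this conjugacy moves holomorphically as $(c,b)$ varies in $\mathcal{S}_{\mathbb{C}}$. The stable direction should contribute only a shift of past coding dictated by the forward composition of markers, while the forward half is governed by the pseudo-monodromy coming from step two. Assembling these consistently should produce the automorphism described by the compositions of markers in $W$.

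The principal obstacle is step one: no satisfactory rigorous theory of herds or of Mandelbrot-like sets for the complex H\'enon family in this parameter range is currently available, and the surgery methods of~\cite{BDK} that trivialize the one-dimensional case do not extend to $\mathbb{C}^2$. A secondary obstacle is that $\mathrm{Aut}(\sigma,\{A,B\}^{\mathbb{Z}})$ lacks a known generating set, so one cannot verify the conjecture by reducing $\rho(\gamma)$ to a normal form and checking the marker composition on generators; instead the proof would have to verify the action directly on a sufficient family of cylinders, using holomorphic motion together with Corollary~\ref{cor:main} to force the answer in the $b\to 0$ limit and propagate it across $\mathcal{S}_{\mathbb{C}}$.
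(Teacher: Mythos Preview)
The statement you are attempting to prove is labeled \texttt{cjt} in the paper: it is Lipa's \emph{conjecture}, not a theorem, and the paper provides no proof of it. On the contrary, the introduction states explicitly that ``a proof of the conjecture of Lipa remains beyond our reach,'' and Section~\ref{sec:Henon} reiterates that ``in full generality, Lipa's conjecture is far from being resolved, in part because the foundation of the conjecture relies on empirically observed phenomenon of herds.'' The paper's contribution is restricted to the degenerated slice $b=0$, where Corollary~\ref{cor:main} describes the pseudo-monodromy and is interpreted as agreeing with (and partially extending) Lipa's prediction on a suitable subset of the Julia set. There is therefore no ``paper's own proof'' to compare your proposal against.

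Your proposal is not a proof but a research programme, and you say as much: you identify the construction of a rigorous theory of herds as the ``principal obstacle'' and the absence of a generating set for $\mathrm{Aut}(\sigma,\{A,B\}^{\mathbb{Z}})$ as a secondary one. These are precisely the difficulties the paper flags. Your outline is broadly consonant with the direction the authors indicate for future work (taking inverse limits of the one-dimensional markers and extending to small $|b|$), but none of the three steps you sketch is carried out, and step one in particular requires a polynomial-like theory for H\'enon maps in this regime that does not yet exist. One further point: even in the degenerated case the paper finds markers of infinite length and markers of the form $\widehat{K}(H_i)\,\star$ that are absent from Lipa's formulation, so any eventual proof would likely have to amend the statement of the conjecture rather than establish it verbatim.
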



In the degenerate case $b=0$ we think of our loops as wrapping around all possible herds and so the left-hand side of the marker, arising from the herd coding, would be every possible word on $\{A,B\}$. This is equivalent to dropping the left-hand word. In this case, the monodromy action conjecture would say that the pseudo-monodromy associated to some pseudo-loop passing through the root point of a hyperbolic component $H$ would be described by the collection of markers $\ast \,  K(H_i)$ for $H_i \triangleright H$, and compositions of these. Thus the markers we obtain from Corollary \ref{cor:main} agree partly with the markers conjectured by Lipa. One difference is that we obtain a marker of infinite length, which is not present in Lipa's formulation, and also markers of the form $\widehat{K}(H_i)\, \star$. In Chapter 5 of his thesis, Lipa considers the degenerate case, but only looks at the points in the Julia set that never re-enter $\Pi_0(H)$. In the two-dimensional case, he considers a continuous extension of this set, thus he is only considering a subset of the Julia set. Restricting to such a subset, Corollory \ref{cor:main} agrees with, and proves the monodromy action conjecture in the degenerate case. However, the additional markers we obtain are necessary to describe the monodromy of the entire Julia set.  

We expect to be able to extend our result to the H\'enon setting, at least for small perturbations, taking the inverse limit of our one-dimensional markers to describe the monodromy associated to specific loops in a $\{b=\varepsilon\}$-slice of the parameter space for $\varepsilon\in\mathbb{C}$ close to $0$. More specifically, if a gap appears allowing us to take a loop around all herds of a hyperbolic component, then the monodromy should be described by the inverse limit of markers stated above. This is supported by evidence from Lipa's thesis, in particular, Theorem 6.22. This suggests that infinite markers, not predicted by Lipa, will be necessary to describe the monodromy.


In full generality, the the monodromy action conjecture is far from being resolved, in part because the foundation of the conjecture relies on empirically observed  phenomenon of herds, and there is no concrete way to obtain the coding of a herd without following the perturbation from the Mandelbrot set. The recent PhD thesis of Richards~\cite{R} improved this point, and performed more precise computation on the statement of the the monodromy action conjecture. Combining this with our understanding of the one-dimensional case may lead to a proof of the monodromy action conjecture at least for H\'enon maps which can be modelled by building up finitely many quadratic polynomials. This will be the subject of our future work.

\medskip

\noindent

\textbf{Acknowledgements.}
Both authors of this article thank Mitsuhiro Shishikura and John Smillie for fruitful advice and comments on this manuscript. They are also grateful to the anonymous referees for their comments which improved the manuscript. YI is partially supported by JSPS KAKENHI Grant Numbers 20H01809 and 22H05107, and TR is partially supported by JSPS KAKENHI Grant Number 22H05107.

\end{document}